\title[Going to Lorentz when fractional estimates fail]{Going to Lorentz when fractional Sobolev, Gagliardo and Nirenberg estimates fail}
\author{Ha\"\i m Brezis}
\address{Department of Mathematics\\
  Rutgers University, Hill Center, Busch Campus\\ 
  110 Frelinghuysen Road, Piscataway, NJ 08854, USA}
\address{
  Departments of Mathematics and Computer Science\\ Technion, Israel Institute of Technology\\ 32.000 Haifa, Israel}
\address{
  Laboratoire Jacques-Louis Lions\\
  Sorbonne Universit\'es, UPMC Universit\'e Paris-6, 4  place Jussieu\\
  75005 Paris, France}
\email{brezis@math.rutgers.edu}
\author{Jean Van Schaftingen}
\address{Universit\'e catholique de Louvain\\ 
Institut de Recherche en Math\'ematique et Physique\\
Chemin du Cyclotron 2 bte L7.01.01\\
1348 Louvain-la-Neuve\\
Belgium}
\email{Jean.VanSchaftingen@UCLouvain.be}
\author{Po-Lam Yung}
\address{Mathematical Sciences Institute \\
Australian National University \\
Canberra ACT 2601 \\
Australia} 
\email{PoLam.Yung@anu.edu.au}
\address{Department of Mathematics \\
The Chinese University of Hong Kong \\
Ma Liu Shui \\
Hong Kong} 
\email{plyung@math.cuhk.edu.hk}
\newcommand{\defeq}{\coloneqq}
\newtheorem{theorem}{Theorem}
\newtheorem{proposition}{Proposition}[section]
\newtheorem{lemma}[proposition]{Lemma}
\theoremstyle{definition}
\newtheorem{openproblem}{Open Problem}
\theoremstyle{remark}
\newtheorem{remark}[proposition]{Remark}
\numberwithin{equation}{section}
\newcommand{\abs}[1]{{\lvert #1 \rvert}}
\newcommand{\norm}[2][]{{\lVert #2 \rVert}_{#1}}
\newcommand{\seminorm}[2][]{{\lvert #2 \rvert}_{#1}}
\newcommand{\Biggnorm}[2][]{{\Biggl\lVert #2 \Biggr\rVert}_{#1}}
\newcommand{\quasinorm}[2][]{{[ #2 ]}_{#1}}
\newcommand{\Biggquasinorm}[2][]{{\Biggl[ #2 \Biggr]}_{#1}}
\newcommand{\st}{\;:\;}
\newcommand{\Rset}{\mathbb{R}}
\newcommand{\dif}{\,\mathrm{d}}
\newcommand{\eofs}{\,}
\renewcommand\subsection{\@startsection{subsection}{2}%
   \z@{.5\linespacing\@plus.7\linespacing}{.1\linespacing}%
   {\normalfont\itshape}} 
\begin{document}

\begin{abstract}
%arXiv compatible LaTeX in the abstract!
In the cases where there is no Sobolev-type or Gagliardo--Nirenberg-type fractional estimate involving $\abs{u}_{W^{s,p}}$, we establish alternative estimates where the strong $L^p$ norms are replaced by Lorentz norms. 
\end{abstract}
\keywords{Fractional Sobolev space; Lorentz space; fractional Gagliardo--Nirenberg interpolation inequalities}
\subjclass[2010]{26D10 (26A33, 35A23, 46E30, 46E35)}
\maketitle

\section{Introduction}

In \cite{Brezis_VanSchaftingen_Yung_2021}*{Theorem 1.1}, it was shown that there exists a constant \(C = C(N)\) such that 
\begin{equation}
  \label{eq_chaK6eique5lo9ahy9Ooju9e}
    \Biggquasinorm[M^p (\Rset^N \times \Rset^N)]{\frac{u (x) - u (y)}{\abs{x - y}^{\frac{N}{p} + 1}}}
    \le 
    C^{1/p} 
    \,
    \norm[L^p (\Rset^N)]{\nabla u}\eofs,\quad \forall u \in C^\infty_c (\Rset^N), \forall p \ge 1.
\end{equation}
Here \(M^p (\Rset^N \times \Rset^N) = L^p_w (\Rset^N \times \Rset^N) = L^{p, \infty} (\Rset^N \times \Rset^N)\), \(1 \le p <\infty\), is the Marcinkiewicz (=weak \(L^p\)) space modelled on \(L^p (\Rset^N \times \Rset^N)\), defined by the condition 
\[
\quasinorm[M^p(\Rset^N \times \Rset^N)]{f}^p \defeq \sup_{\lambda > 0} \lambda^p \mathcal{L}^{2N} \bigl(\{ (x,y)\in \Rset^N \times \Rset^N \st \abs{f (x)} \ge \lambda\}\bigr)
< \infty
\]
(see for example \cite{Castillo_Rafeiro_2016}*{Chapter 5} or \cite{Grafakos_2014}*{\S 1.1}).

We also know that the inequality
\begin{equation}
\label{eq_reiG4eiboodieheu4Ohnoh1U} \Biggnorm[L^p (\Rset^N \times \Rset^N)]{\frac{u (x) - u (y)}{\abs{x - y}^{\frac{N}{p} + 1}}}\le 
    C(N,p) 
    \,
    \norm[L^p (\Rset^N)]{\nabla u}\eofs,\quad \forall u \in C^\infty_c (\Rset^N), \forall p \ge 1
\end{equation}
does \emph{not} hold. 
In fact the failure of \eqref{eq_reiG4eiboodieheu4Ohnoh1U} is more striking: for every \(1 \le p < \infty\) and every measurable function \(u\),
\begin{equation}
\label{eq_oofohxeefou8ohShev1zaafe}
\Biggnorm[L^p (\Rset^N \times \Rset^N)]{\frac{u (x) - u (y)}{\abs{x - y}^{\frac{N}{p} + 1}}}
< \infty \quad \Longrightarrow \quad u \text{ is constant};
\end{equation}
See \cite{Bourgain_Brezis_Mironescu_2000} and \cites{Brezis_2002,DeMarco_Mariconda_Solimini_2008,RanjbarMotlagh}.

A natural question is whether one can improve \eqref{eq_chaK6eique5lo9ahy9Ooju9e} in the Lorentz scale, where \(L^{p, q} (X, \mu)\), with \(1 \le p <\infty\) and \(1 \le q\le \infty\), is characterized by (see for example \cite{Grafakos_2014}*{\S 1.4}, \cite{Castillo_Rafeiro_2016}*{Chapter 6}, \cite{Hunt_1966} or \cite{Ziemer_1989}*{\S 1.8}), when \(q < \infty\)
\begin{equation}
  \label{eq_chahvoChohyae4aiK9ez4zoh}
\quasinorm[L^{p, q} (\Rset^N \times \Rset^N)]{f}^q
= p \int_0^\infty \lambda^q \, \mathcal{L}^{2N} (\{ (x, y) \in \Rset^N \times \Rset^N \st \abs{f (x, y)} \ge \lambda\})^\frac{q}{p} \frac{\dif \lambda}{\lambda} < +\infty\eofs,
\end{equation}
and when \(q = \infty\) by \(\quasinorm[L^{p, \infty} (\Rset^N\times \Rset^N)]{f}
= \quasinorm[M^p (\Rset^N \times \Rset^N)]{f} <+\infty\).
In other words, the question is whether the estimate
\begin{equation}
    \Biggquasinorm[L^{p, q} (\Rset^N \times \Rset^N)]{\frac{u (x) - u (y)}{\abs{x - y}^{\frac{N}{p} + 1}}}
    \le 
    C(N, p, q)
    \,
    \norm[L^p (\Rset^N)]{\nabla u}\eofs,\quad \forall u \in C^\infty_c (\Rset^N)
  \end{equation}
  holds for some \(q \in (p,\infty)\) (\(q\) depending on \(p\) and \(N\)).
  The answer is negative, as can be seen from the following generalization of \eqref{eq_oofohxeefou8ohShev1zaafe}.
  
  \begin{theorem}
  \label{thm_X_1} Assume that \(1 \le p < \infty\),
  \(1 \le q < \infty\) and \(u\) is measurable. Then
  \begin{equation}
  \label{eq_sebaich1Ofai9Ooqu2bainui}
  \Biggquasinorm[L^{p, q} (\Rset^N \times \Rset^N)]{\frac{u (x) - u(y)}{\abs{x - y}^{\frac{N}{p} + 1}} }
  < \infty \quad \Longrightarrow \quad u \text{ is constant}.
  \end{equation}
  \end{theorem}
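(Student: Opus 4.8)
\emph{Set-up and strategy.} Write $g(x,y) \defeq \dfrac{u(x)-u(y)}{\abs{x-y}^{\frac{N}{p}+1}}$ and, for $\lambda > 0$,
\[
  \mu(\lambda) \defeq \mathcal{L}^{2N}\bigl(\bigl\{(x,y) \in \Rset^N \times \Rset^N \st \abs{u(x)-u(y)} \ge \lambda\abs{x-y}^{\frac{N}{p}+1}\bigr\}\bigr) = \mathcal{L}^{2N}\bigl(\{\abs{g}\ge\lambda\}\bigr),
\]
so that by \eqref{eq_chahvoChohyae4aiK9ez4zoh} the hypothesis $\quasinorm[L^{p,q}(\Rset^N\times\Rset^N)]{g} < \infty$ reads $\int_0^\infty\bigl(\lambda^p\mu(\lambda)\bigr)^{q/p}\,\tfrac{\dif\lambda}{\lambda} < \infty$. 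The plan is to pit two estimates for $\lambda^p\mu(\lambda)$ against each other. On one hand this integrability is incompatible with $\liminf_{\lambda\to\infty}\lambda^p\mu(\lambda) > 0$: if $\liminf_{\lambda\to\infty}\lambda^p\mu(\lambda) =: \ell > 0$ there is $\Lambda$ with $\lambda^p\mu(\lambda)\ge\ell/2$ for $\lambda\ge\Lambda$, whence $\int_\Lambda^\infty(\lambda^p\mu(\lambda))^{q/p}\,\tfrac{\dif\lambda}{\lambda}\ge(\ell/2)^{q/p}\int_\Lambda^\infty\tfrac{\dif\lambda}{\lambda} = +\infty$; so necessarily $\liminf_{\lambda\to\infty}\lambda^p\mu(\lambda) = 0$. (Only $q<\infty$ is used, so no case distinction is needed; incidentally, for $1\le q\le p$ one even has $L^{p,q}(\Rset^N\times\Rset^N) \subseteq L^{p,p}(\Rset^N\times\Rset^N) = L^p(\Rset^N\times\Rset^N)$, so in that range the statement already follows from \eqref{eq_oofohxeefou8ohShev1zaafe}.) On the other hand, I will argue that if $u$ is non-constant then $\liminf_{\lambda\to\infty}\lambda^p\mu(\lambda) > 0$; the two facts are contradictory, which forces $u$ to be constant.

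\emph{Recovering Sobolev regularity.} Since $L^{p,q}(\Rset^N\times\Rset^N)\hookrightarrow L^{p,\infty}(\Rset^N\times\Rset^N) = M^p(\Rset^N\times\Rset^N)$, the hypothesis yields $\quasinorm[M^p(\Rset^N\times\Rset^N)]{g} < \infty$. I would then invoke the converse of \eqref{eq_chaK6eique5lo9ahy9Ooju9e} — the characterisation of $\dot{W}^{1,p}(\Rset^N)$ (for $p>1$), respectively of $BV(\Rset^N)$ (for $p=1$), by finiteness of this weak-type functional — to conclude that, after modification on a null set, $u \in \dot{W}^{1,p}(\Rset^N)$, respectively $u \in BV_{\mathrm{loc}}(\Rset^N)$. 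This is the one input not formally contained in the excerpt and would have to be cited.

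\emph{The lower bound for $\mu$ (the crux).} Take $1<p<\infty$; the case $p=1$ is analogous, with $BV$ in place of $\dot W^{1,1}$ and the bound tested against $\abs{Du}$. The input I would use is that, for a.e.\ $x$, $u$ has a first-order Taylor expansion at $x$ in the $L^1$-average sense, i.e.\ $\varepsilon(x,r)\defeq\mathcal{L}^N(B_r)^{-1}\int_{B_r(0)}\abs{u(x+h)-u(x)-\nabla u(x)\cdot h}\,\abs{h}^{-1}\dif h\to 0$ as $r\to0^+$ (it follows from $u(x+h)-u(x) = \int_0^1\nabla u(x+th)\cdot h\,\dif t$ together with Lebesgue differentiation of $\nabla u$). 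Fix such an $x$ with $\nabla u(x)\ne0$. On the region $S_\lambda\defeq\{h\st\abs{\nabla u(x)\cdot h}\ge 2\lambda\abs{h}^{\frac Np+1}\}$ — which lies in the ball of radius $(\abs{\nabla u(x)}/2\lambda)^{p/N}$ and has measure $\sim\abs{\nabla u(x)}^p\lambda^{-p}$ by a polar-coordinates computation — the bound $\abs{u(x+h)-u(x)}\ge\lambda\abs{h}^{\frac Np+1}$ holds off the exceptional set $\{h\in S_\lambda\st\abs{u(x+h)-u(x)-\nabla u(x)\cdot h}>\lambda\abs{h}^{\frac Np+1}\}$, whose measure, split over dyadic shells $\abs h\sim 2^{-j}$ and estimated by Chebyshev's inequality against $\varepsilon(x,\cdot)$, is $o(\lambda^{-p})$ as $\lambda\to\infty$ (this is where $p>1$ enters, through convergence of $\sum_j 2^{-jN/p'}$, $p'=p/(p-1)$). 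Hence, for $\lambda$ large depending on $x$, $\mathcal{L}^N(\{h\st\abs{u(x+h)-u(x)}\ge\lambda\abs{h}^{\frac Np+1}\})\gtrsim\abs{\nabla u(x)}^p\lambda^{-p}$. Integrating in $x$ — using $\mu(\lambda) = \int_{\Rset^N}\mathcal{L}^N(\{h\st\abs{u(x+h)-u(x)}\ge\lambda\abs{h}^{\frac Np+1}\})\,\dif x$ after the change of variables $(x,y)\mapsto(x,y-x)$ — and letting $\lambda\to\infty$ (monotone convergence as $\{x:\lambda\text{ large enough}\}\uparrow\{\nabla u\ne0\}$), I obtain $\liminf_{\lambda\to\infty}\lambda^p\mu(\lambda)\gtrsim\norm[L^p(\Rset^N)]{\nabla u}^p$, which is strictly positive (or $+\infty$) as soon as $u$ is non-constant, $\Rset^N$ being connected. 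This contradicts the easy half, so $\nabla u\equiv0$ and $u$ is constant.

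\emph{Main obstacle.} I expect the technical heart to be this last estimate on $\mu$: one must check that the contributions of the different base points $x$ and of the different dyadic scales of $\abs{h}$ genuinely add up to a quantity comparable to $\norm[L^p]{\nabla u}^p\lambda^{-p}$ rather than being lost to cancellation or washed out in the $x$-integration, and one must accommodate the borderline case $p=1$ through the $BV$ theory. If a sharp ``nonlocal coarea'' asymptotic of the form $\lim_{\lambda\to\infty}\lambda^p\mu(\lambda) = \kappa_{N,p}\norm[L^p(\Rset^N)]{\nabla u}^p$ for $u\in\dot{W}^{1,p}$ is available off the shelf, this step can be quoted outright, and the theorem then drops out by combining it with the two elementary observations above.
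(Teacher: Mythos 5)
Your strategy is sound for $p>1$ but has a genuine gap at $p=1$, and it is worth noting that the paper takes a quite different (and more elementary) route.

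For $p>1$, your plan works: $L^{p,q}\hookrightarrow L^{p,\infty}$ gives the full $M^p$ bound, the converse of \eqref{eq_chaK6eique5lo9ahy9Ooju9e} (from \cite{Brezis_VanSchaftingen_Yung_2021}, not contained in the present paper) upgrades $u$ to $\dot{W}^{1,p}$, and your Lebesgue-differentiation/Taylor argument together with Fatou delivers $\liminf_{\lambda\to\infty}\lambda^p\mu(\lambda)\gtrsim\norm[L^p]{\nabla u}^p$. The dyadic bookkeeping is exactly where $p>1$ enters: the exceptional set over the shell $|h|\sim 2^{-j}R$ has measure $\lesssim\lambda^{-1}R^{N/p'}2^{-jN/p'}\tilde\varepsilon(x,2^{-j}R)$, and $\sum_j 2^{-jN/p'}<\infty$ precisely because $N/p'>0$. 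This is a legitimate alternative to the paper's treatment, which instead proves a stronger statement, \cref{proposition_level_set_constant}: assuming only $\lim_{\lambda\to\infty}\lambda^p\mu(\lambda)=0$ (which, unlike your argument, does \emph{not} give $\sup_\lambda\lambda^p\mu(\lambda)<\infty$ and hence no access to the $M^p$ converse), they run a completely self-contained bisection/iteration argument \eqref{eq_Sietheewu9uajuoB9ohsh5ah} that forces $\iint(\,\abs{u(x)-u(y)}/\abs{x-y}^{N/p+1}-\lambda)_+=0$. That argument needs no characterization theorem at all, and is what makes the paper's $\liminf$-version an \emph{open problem} rather than a corollary; your chain of reasoning, requiring the full $M^p$ bound, cannot be sharpened to address that open problem even in principle.

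The genuine gap is $p=1$. You write that ``the case $p=1$ is analogous, with $BV$ in place of $\dot W^{1,1}$,'' but the argument you sketched \emph{does not} carry over. At $p=1$ one has $N/p'=0$, so $\sum_j 2^{-jN/p'}=\sum_j 1$ diverges, and the exceptional-set estimate over dyadic shells no longer sums to $o(\lambda^{-1})$; moreover a $BV$ function need not be $L^1$-differentiable a.e.\ in the sense you use (the Cantor and jump parts of $Du$ obstruct the pointwise Taylor expansion, and for the jump part the geometry of $S_\lambda$ is different in kind). This is not an oversight that can be papered over: the paper itself treats $p=1$ by invoking Poliakovsky's result \cite{Poliakovsky}, which the authors explicitly describe as ``quite intricate,'' precisely because the elementary bisection trick and the weak-type characterization alone do not settle the $p=1$ endpoint. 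To complete your proof at $p=1$ you would have to import Poliakovsky's theorem (or an equivalent), at which point you are essentially running the paper's own argument in that case.
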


The heart of the matter is the following far-reaching extension of \eqref{eq_oofohxeefou8ohShev1zaafe}. It was originally presented in \cite[Proposition 6.3]{Brezis_VanSchaftingen_Yung_2020_arXiv} when \(p > 1\); the case \(p = 1\) is essentially due to A. Poliakovsky \cite[Corollary 1.1]{Poliakovsky} who settled \cite[Open Problem 1]{Brezis_VanSchaftingen_Yung_2020_arXiv}.  
  
\begin{theorem}
\label{proposition_level_set_constant}
Let \(1 \le  p < \infty\) and let  \(u : \Rset^N \to \Rset\) be a measurable function satisfying
\begin{equation}
\label{eq_EuK1che4iepahtheyoo2oiw3}
 \lim_{\lambda \to \infty} 
 \lambda^p 
 \mathcal{L}^{2N}
 \biggl(
 \biggl\{
 (x, y) \in \Rset^N \times \Rset^N
 \st 
 \frac{\abs{u (x) - u (y)}}{\abs{x - y}^{\frac{N}{p} + 1}}
 \ge \lambda
 \biggr\}
 \biggr)
  = 0\eofs .
\end{equation}
Then \(u\) is constant. 
\end{theorem}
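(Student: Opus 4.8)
The plan is to reduce to smooth functions by mollification, and then to contradict \eqref{eq_EuK1che4iepahtheyoo2oiw3} by means of an elementary lower bound that holds for every non‑constant smooth function. As a first step I would reduce to the case \(u\in L^\infty(\Rset^N)\): since the truncation \(u^T\defeq\max(-T,\min(T,u))\) is \(1\)-Lipschitz in \(u\), it satisfies \(\abs{u^T(x)-u^T(y)}\le\abs{u(x)-u(y)}\) and therefore inherits \eqref{eq_EuK1che4iepahtheyoo2oiw3}; granting the statement for bounded functions, each \(u^T\) is constant, and since \(u^T\to u\) a.e.\ as \(T\to\infty\), so is \(u\). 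Assume henceforth \(u\in L^\infty\).

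Fix a standard mollifier \((\rho_\varepsilon)_{\varepsilon>0}\) and set \(u_\varepsilon\defeq u\ast\rho_\varepsilon\in C^\infty\cap L^\infty\). Writing \(F(a,b)\defeq\frac{u(a)-u(b)}{\abs{a-b}^{\frac{N}{p}+1}}\) and using \(\abs{x-y}=\abs{(x-z)-(y-z)}\), one has
\[
  \frac{u_\varepsilon(x)-u_\varepsilon(y)}{\abs{x-y}^{\frac{N}{p}+1}}=\int_{\Rset^N}F(x-z,y-z)\,\rho_\varepsilon(z)\dif z=(F\ast\nu_\varepsilon)(x,y),
\]
where \(\nu_\varepsilon\) is the probability measure on \(\Rset^N\times\Rset^N\) that is the image of \(\rho_\varepsilon(z)\dif z\) under \(z\mapsto(z,z)\), the convolution being taken on \(\Rset^N\times\Rset^N\). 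I claim \(u_\varepsilon\) again satisfies \eqref{eq_EuK1che4iepahtheyoo2oiw3}. Given \(\delta>0\), use \eqref{eq_EuK1che4iepahtheyoo2oiw3} to choose \(\Lambda\) with \(\sup_{\lambda\ge\Lambda}\lambda^p\mathcal{L}^{2N}(\{\abs{F}\ge\lambda\})\le\delta\), and split \(F=F\mathbf{1}_{\{\abs{F}\le\Lambda\}}+F\mathbf{1}_{\{\abs{F}>\Lambda\}}=:F_1+F_2\), so that \(\norm[L^\infty]{F_1\ast\nu_\varepsilon}\le\Lambda\) while \(\quasinorm[M^p]{F_2}^p\le\delta\). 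When \(p>1\), \(\quasinorm[M^p]{\cdot}\) is equivalent to the genuine norm \(h\mapsto\sup\{\mathcal{L}^{2N}(E)^{-1/p'}\int_E\abs{h}\st 0<\mathcal{L}^{2N}(E)<\infty\}\) (see \cite{Grafakos_2014}*{\S 1.4}), which by Minkowski's integral inequality and translation invariance of \(\mathcal{L}^{2N}\) does not increase under convolution with a probability measure; hence \(\quasinorm[M^p]{F_2\ast\nu_\varepsilon}^p\le C_p\,\delta\). Since \(\{\abs{F_1\ast\nu_\varepsilon}>\Lambda\}=\emptyset\), combining these bounds gives \(\limsup_{\lambda\to\infty}\lambda^p\mathcal{L}^{2N}(\{\abs{F\ast\nu_\varepsilon}\ge\lambda\})\le C_p'\,\delta\), and letting \(\delta\to0\) proves the claim.

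It then remains to observe that a smooth function satisfying \eqref{eq_EuK1che4iepahtheyoo2oiw3} must be constant. Indeed, if \(\nabla u_\varepsilon(x_0)\ne0\), then \(\abs{\nabla u_\varepsilon}\ge m>0\) on some ball \(B(x_0,\delta_0)\); by Taylor's formula, for \(x\in B(x_0,\delta_0)\) and \(y=x-re'\) with \(e'\) in a fixed narrow cone about \(\nabla u_\varepsilon(x)/\abs{\nabla u_\varepsilon(x)}\) and \(r>0\) small, one has \(\abs{u_\varepsilon(x)-u_\varepsilon(y)}\ge\tfrac{m}{8}r\), whence \(\frac{\abs{u_\varepsilon(x)-u_\varepsilon(y)}}{\abs{x-y}^{\frac{N}{p}+1}}\ge\tfrac{m}{8}r^{-N/p}\ge\lambda\) as soon as \(r\le(\tfrac{m}{8\lambda})^{p/N}\). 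Integrating in \(y\) over this cone of radius \((\tfrac{m}{8\lambda})^{p/N}\) and then in \(x\) over \(B(x_0,\delta_0)\) yields \(\mathcal{L}^{2N}\bigl(\bigl\{(x,y)\st\frac{\abs{u_\varepsilon(x)-u_\varepsilon(y)}}{\abs{x-y}^{\frac{N}{p}+1}}\ge\lambda\bigr\}\bigr)\ge c\,m^p\lambda^{-p}\) for all large \(\lambda\), so the limit in \eqref{eq_EuK1che4iepahtheyoo2oiw3} is strictly positive --- a contradiction. Hence \(\nabla u_\varepsilon\equiv0\), \(u_\varepsilon\) is constant, and letting \(\varepsilon\to0\) in \(L^1_{\mathrm{loc}}\) shows \(u\) is constant; the reduction of the first step then settles the case \(p>1\).

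The step I expect to be the main obstacle is precisely the passage through mollification when the endpoint \(p=1\) is allowed: there \(M^1\) carries no equivalent norm and convolution with an \(L^1\) function need not preserve \(M^1\), so mollification may destroy \eqref{eq_EuK1che4iepahtheyoo2oiw3} and the soft argument above collapses. This is the genuinely hard case, and I would invoke the finer analysis of A.~Poliakovsky \cite[Corollary 1.1]{Poliakovsky} (settling \cite[Open Problem 1]{Brezis_VanSchaftingen_Yung_2020_arXiv}) to handle \(p=1\).
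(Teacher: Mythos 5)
Your proof is correct for \(p>1\), and it defers to Poliakovsky for \(p=1\) exactly as the paper does, but the route you take for \(p>1\) is genuinely different from the one in the paper. You reduce the statement to the smooth case: first a truncation \(u\mapsto u^T\) (which only shrinks the difference quotient, hence preserves \eqref{eq_EuK1che4iepahtheyoo2oiw3}), then mollification \(u\mapsto u\ast\rho_\varepsilon\). The key observation that makes the mollification step go through is that the difference quotient of \(u_\varepsilon\) is the convolution of the difference quotient of \(u\) with the diagonal push-forward \(\nu_\varepsilon\) of \(\rho_\varepsilon\), together with the fact that for \(p>1\) the space \(M^p\) carries an equivalent genuine norm that is non-increasing under convolution with a probability measure; your \(F_1+F_2\) splitting then upgrades this to the ``little-\(o\)'' statement and not just a bound. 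Finally, a positive-density argument at a point where \(\nabla u_\varepsilon\neq 0\) contradicts \eqref{eq_EuK1che4iepahtheyoo2oiw3}, so \(u_\varepsilon\) is constant and you pass to the limit.

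The paper instead argues directly on \(u\), with no regularization at all. It introduces
\[
\Phi(\lambda)\defeq\iint_{\Rset^N\times\Rset^N}\biggl(\frac{\abs{u(x)-u(y)}}{\abs{x-y}^{\frac{N}{p}+1}}-\lambda\biggr)_+\dif y\dif x
=\int_\lambda^\infty \mathcal{L}^{2N}(E_t)\dif t ,
\]
observes that \eqref{eq_EuK1che4iepahtheyoo2oiw3} forces \(\lambda^{p-1}\Phi(\lambda)\to 0\) (this uses \(\int^\infty t^{-p}\dif t<\infty\), hence \(p>1\)), and then exploits the midpoint triangle inequality \(\abs{u(x)-u(y)}\le\abs{u(x)-u(\frac{x+y}{2})}+\abs{u(\frac{x+y}{2})-u(y)}\) together with the change of variables \(y\mapsto\frac{x+y}{2}\) to obtain the self-improving inequality \(\Phi(\lambda)\le 2^{\frac{N}{p}(p-1)}\Phi(2^{N/p}\lambda)\); iterating and letting the threshold tend to infinity gives \(\Phi\equiv 0\), i.e.\ \(u(x)=u(y)\) a.e. Both arguments crucially use \(p>1\), but in different places: you use the normability of \(M^p\), the paper uses the integrability of \(t^{-p}\) at infinity. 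The paper's route is shorter, more self-contained (no mollification, no appeal to equivalent norms or to Young-type estimates in Lorentz spaces), and stays entirely at the level of the distribution function of the original \(u\); your route is perhaps more conceptual, reducing a measure-theoretic statement to a pointwise statement about smooth functions, at the cost of the extra truncation/mollification machinery. Both correctly isolate \(p=1\) as the place where the soft argument breaks down and invoke \cite{Poliakovsky} there.
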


The proofs of \cref{thm_X_1,proposition_level_set_constant} are given in \cref{section_constant}.

\medskip

Recall the fractional Sobolev spaces \(W^{s, p}\) (also called Slobodeskii spaces) is associated with the Gagliardo semi-norm, \(0 < s < 1\) and \(1 \le p < \infty\) defined by 
\begin{equation}
\label{eq_Jai7ahg2tahz4ua3zaishatu}
\abs{u}_{W^{s, p}}^p 
\defeq \iint\limits_{\Rset^N \times \Rset^N} \frac{\abs{u (x) - u (y)}^p}{\abs{x - y}^{N + sp}} \dif y \dif x\eofs.
\end{equation}
In \cite{Brezis_VanSchaftingen_Yung_2021} we announced the following theorem, which is a substitute for a fractional Sobolev-type estimate \(\dot{W}^{1,1}(\Rset^N) \hookrightarrow W^{1-N(1-\frac{1}{p}),p}(\Rset^N)\) that fails in dimension \(N = 1\):

\begin{theorem}
\label{theorem_W11_Wsp}\cite{Brezis_VanSchaftingen_Yung_2021}*{Corollary 4.1}
%Let \(N \ge 1\).
There exists an absolute constant \(C\) such that for every \(1 \leq p < \infty\), 
\begin{equation}
  \label{eq_Queewohz0oozoghie9rohjai}
    \Biggquasinorm[M^p (\Rset \times \Rset)]{\frac{u (x) - u (y)}{\abs{x - y}^{\frac{2}{p}}}}
    \le 
    C 
    \,
    \norm[L^1 (\Rset)]{u'}\eofs,\qquad \forall u \in C^\infty_c (\Rset)\eofs.
\end{equation}
\end{theorem}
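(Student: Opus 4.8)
The plan is to deduce \eqref{eq_Queewohz0oozoghie9rohjai} from the special case \(N = p = 1\) of the already-known estimate \eqref{eq_chaK6eique5lo9ahy9Ooju9e}, which (since \(\nabla u = u'\) and \(\tfrac{N}{p}+1 = 2\) there) reads
\[
  \Biggquasinorm[M^1 (\Rset \times \Rset)]{\frac{u (x) - u (y)}{\abs{x - y}^{2}}}
  \le C \, \norm[L^1 (\Rset)]{u'}\eofs, \qquad \forall u \in C^\infty_c (\Rset)\eofs,
\]
and to bridge the gap between the exponents \(2\) and \(2/p\) using only the elementary a priori bound \(\abs{u (x) - u (y)} \le \int_{\Rset} \abs{u'} = \norm[L^1(\Rset)]{u'}\), valid for all \(x, y \in \Rset\). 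Since \eqref{eq_Queewohz0oozoghie9rohjai} is trivial when \(u\) is constant, I will assume \(A \defeq \norm[L^1(\Rset)]{u'} > 0\) and set, for \(x \ne y\),
\[
  h (x, y) \defeq \frac{u (x) - u (y)}{\abs{x - y}^{2/p}}\eofs, \qquad
  g (x, y) \defeq \frac{u (x) - u (y)}{\abs{x - y}^{2}}\eofs, \qquad \text{so that}\quad h = g \, \abs{x - y}^{2 - \frac{2}{p}}\eofs,
\]
observing that the weight exponent \(2 - 2/p\) is nonnegative precisely because \(p \ge 1\).

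The key step I would carry out is a level-set inclusion: for every \(\lambda > 0\),
\[
  \bigl\{ (x, y) \st \abs{h (x, y)} \ge \lambda \bigr\} \subseteq \bigl\{ (x, y) \st \abs{g (x, y)} \ge \lambda^p A^{1 - p} \bigr\}\eofs .
\]
Indeed, if \(\abs{h (x, y)} \ge \lambda\) then \(A \ge \abs{u (x) - u (y)} = \abs{h (x, y)} \, \abs{x - y}^{2/p} \ge \lambda \, \abs{x - y}^{2/p}\), so that \(\abs{x - y} \le (A/\lambda)^{p/2}\); substituting this into \(\abs{h (x, y)} = \abs{g (x, y)} \, \abs{x - y}^{2 - 2/p}\) and using \(2 - 2/p \ge 0\) gives \(\lambda \le \abs{h (x, y)} \le \abs{g (x, y)} \, (A/\lambda)^{p - 1}\), i.e.\ \(\abs{g (x, y)} \ge \lambda^p A^{1 - p}\).

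Granting this inclusion, I would finish by combining it with the case \(N = p = 1\) of \eqref{eq_chaK6eique5lo9ahy9Ooju9e}: for every \(\lambda > 0\),
\[
  \lambda^p \, \mathcal{L}^2 \bigl( \{ \abs{h} \ge \lambda \} \bigr)
  \le \lambda^p \, \mathcal{L}^2 \bigl( \{ \abs{g} \ge \lambda^p A^{1 - p} \} \bigr)
  \le \frac{\lambda^p}{\lambda^p A^{1 - p}} \, \quasinorm[M^1 (\Rset \times \Rset)]{g}
  \le A^{p - 1} \cdot C A = C A^p\eofs,
\]
where the second inequality is the definition of the Marcinkiewicz quasinorm applied at the level \(\lambda^p A^{1-p}>0\), and the last uses \(\quasinorm[M^1 (\Rset \times \Rset)]{g} \le C A\). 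Taking the supremum over \(\lambda\) and the \(p\)-th root yields \(\quasinorm[M^p (\Rset \times \Rset)]{h} \le (C A^p)^{1/p} = C^{1/p} A \le C A\) after enlarging \(C\) so that \(C \ge 1\) (the borderline \(p = 1\) being simply the identity \(h = g\)), which is exactly \eqref{eq_Queewohz0oozoghie9rohjai}. There is no real analytic difficulty beyond spotting this reduction: the point is that, because the right-hand side of \eqref{eq_Queewohz0oozoghie9rohjai} involves only \(\norm[L^1]{u'}\), one obtains the free pointwise control \(\abs{u (x) - u (y)} \le \norm[L^1]{u'}\), which trades the weak-\(L^1\) estimate for \(g\) against a weak-\(L^p\) estimate for the extra-weighted \(h = g \, \abs{x - y}^{2 - 2/p}\); the restriction \(p \ge 1\), i.e.\ \(2 - 2/p \ge 0\), is precisely what makes that weight harmless once \(\abs{x - y}\) is known to be bounded above on the relevant level set.
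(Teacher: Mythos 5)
Your proof is correct, and it follows essentially the same route as the paper: the paper derives this theorem from Theorem~\ref{theorem_intro}, whose proof in \cref{section_weak_estimates} uses precisely the same level-set inclusion obtained by trading the a priori bound on \(\abs{u(x)-u(y)}\) (there \(2\norm[L^\infty]{u}\), here \(\norm[L^1]{u'}\)) against the weak-\(L^1\) case \(p=1\) of \eqref{eq_chaK6eique5lo9ahy9Ooju9e}. You simply inline the reduction and use \(\norm[L^1(\Rset)]{u'}\) directly in place of \(2\norm[L^\infty(\Rset)]{u}\), which is exactly what the paper does implicitly when it passes from Theorem~\ref{theorem_intro} to Theorem~\ref{theorem_W11_Wsp} via \(\norm[L^\infty(\Rset)]{u}\le\norm[L^1(\Rset)]{u'}\).
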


\begin{remark}
When \(p = 2\), estimate \eqref{eq_Queewohz0oozoghie9rohjai} is originally due to Greco and Schiattarella \cite{Greco_Schiattarella}. 
%\eqref{eq_Queewohz0oozoghie9rohjai} remains true when \(p = 1\); it is the case of \eqref{eq_chaK6eique5lo9ahy9Ooju9e} when \(N = 1\), \(p = 1\).
\end{remark}

In \cite{Brezis_VanSchaftingen_Yung_2021}, we also announced the following theorem, which offers an alternative when the ``anticipated'' fractional Gagliardo--Nirenberg-type inequality 
\begin{equation}
%   \label{eq_shaiphoBiephoum3ez2uizee}
  \Biggnorm[L^p (\Rset^N \times \Rset^N)]
  {\frac{u (x) - u (y)}{\abs{x - y}^\frac{N + 1}{p}}}
  \le C \norm[L^\infty  (\Rset^N)] {u}^{1 - 1/p}
  \norm[L^{1} (\Rset^N)]{\nabla u}^{1/p}\eofs, \qquad 
  \forall u \in C^\infty_c (\Rset^N)
\end{equation}
fails for every \(1 \le p < \infty\):

\begin{theorem}
  \label{theorem_intro}\cite{Brezis_VanSchaftingen_Yung_2021}*{Corollary 5.1}
For every \(N \geq 1\), there exists a constant \(C = C(N)\) such that for every \(1 \le p < \infty\),
\begin{equation} 
\label{eq:1.14}
\Biggquasinorm[M^p (\Rset^N \times \Rset^N)]
{\frac{u (x) - u (y)}{\abs{x - y}^\frac{N + 1}{p}}}\le C \norm[L^\infty  (\Rset^N)]{u}^{1 - 1/p}
\norm[L^1 (\Rset^N)]{\nabla u}^{1/p}\eofs,
\qquad \forall  u \in C^\infty_c (\Rset^N)\eofs.
\end{equation}
\end{theorem}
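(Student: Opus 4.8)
The plan is to derive \cref{theorem_intro} from the case \(p = 1\) of \eqref{eq_chaK6eique5lo9ahy9Ooju9e} together with the elementary bound \(\abs{u(x) - u(y)} \le 2 \norm[L^\infty (\Rset^N)]{u}\), by a threshold-shifting device. Fix \(u \in C^\infty_c (\Rset^N)\), which we may assume is not identically zero, and fix \(\lambda > 0\). By the definition of the Marcinkiewicz quasinorm it suffices to prove that
\[
\lambda^p \, \mathcal{L}^{2N} (E_\lambda) \le C(N) \, \norm[L^\infty (\Rset^N)]{u}^{p - 1} \, \norm[L^1 (\Rset^N)]{\nabla u},
\qquad
E_\lambda \defeq \Bigl\{ (x, y) \in \Rset^N \times \Rset^N \st \abs{u(x) - u(y)} \ge \lambda \abs{x - y}^{\frac{N + 1}{p}} \Bigr\},
\]
with \(C(N)\) independent of \(\lambda\) and of \(p \in [1, \infty)\).

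The first step is to record two observations about \(E_\lambda\). Since \(\abs{u(x) - u(y)} \le 2 \norm[L^\infty (\Rset^N)]{u}\), every \((x, y) \in E_\lambda\) satisfies \(\abs{x - y} \le R_\lambda \defeq \bigl( 2 \norm[L^\infty (\Rset^N)]{u}/\lambda \bigr)^{p/(N + 1)}\). And since the exponent \((N + 1)(\frac{1}{p} - 1)\) is nonpositive (because \(p \ge 1\)), the function \(t \mapsto t^{(N + 1)(1/p - 1)}\) is nonincreasing, so on \(\{\abs{x - y} \le R_\lambda\}\) one has \(\abs{x - y}^{\frac{N + 1}{p}} = \abs{x - y}^{(N + 1)(1/p - 1)} \abs{x - y}^{N + 1} \ge R_\lambda^{(N + 1)(1/p - 1)} \abs{x - y}^{N + 1}\). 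Combining the two observations, every \((x, y) \in E_\lambda\) satisfies
\[
\frac{\abs{u(x) - u(y)}}{\abs{x - y}^{N + 1}} \ge \mu_\lambda \defeq \lambda \, R_\lambda^{(N + 1)(1/p - 1)} = \lambda^p \, \bigl( 2 \norm[L^\infty (\Rset^N)]{u} \bigr)^{1 - p},
\]
the last equality being a one-line computation from the definition of \(R_\lambda\). In other words, \(E_\lambda \subseteq \{\, \abs{u(x) - u(y)}/\abs{x - y}^{N + 1} \ge \mu_\lambda \,\}\).

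The conclusion is then immediate. Applying the case \(p = 1\) of \eqref{eq_chaK6eique5lo9ahy9Ooju9e} at the level \(\mu_\lambda\) — that is, unwinding the definition of the \(M^1\) quasinorm and using the inclusion just obtained — gives \(\mu_\lambda \, \mathcal{L}^{2N} (E_\lambda) \le C(N) \, \norm[L^1 (\Rset^N)]{\nabla u}\), whence
\[
\mathcal{L}^{2N} (E_\lambda) \le C(N) \, 2^{p - 1} \, \norm[L^\infty (\Rset^N)]{u}^{p - 1} \, \norm[L^1 (\Rset^N)]{\nabla u} \, \lambda^{-p}.
\]
Multiplying by \(\lambda^p\), taking the supremum over \(\lambda > 0\), and then the \(p\)-th root yields \eqref{eq:1.14} with constant \(\bigl( C(N) 2^{p - 1} \bigr)^{1/p} = C(N)^{1/p} \, 2^{1 - 1/p} \le 2 \max \{1, C(N)\}\), which depends on \(N\) alone.

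The only point that genuinely forces the route through the endpoint \eqref{eq_chaK6eique5lo9ahy9Ooju9e} — rather than a direct estimate of \(\abs{u(x) - u(y)}\) by the line integral of \(\nabla u\) along \([x, y]\) — is uniformity of the constant as \(p \to 1^{+}\). The direct argument does prove \eqref{eq:1.14} for each fixed \(p > 1\), but it leads to the radial integral \(\int_0^{R_\lambda} r^{N - (N + 1)/p} \dif r\), which diverges at \(p = 1\) and carries the factor \(\bigl( (N + 1)(1 - 1/p) \bigr)^{-1}\) that blows up as \(p \to 1^{+}\). In the argument above, by contrast, all of the \(p\)-dependence sits in the benign prefactor \(2^{p - 1}\) and in the exponents \(1/p\) and \(1 - 1/p\) on the two norms, exactly as the statement demands; everything else is bookkeeping.
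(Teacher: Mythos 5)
Your proof is correct and is essentially the paper's own argument, just spelled out in slightly greater detail: the paper compresses the inclusion \(E_\lambda \subseteq \{ \abs{u(x)-u(y)}/\abs{x-y}^{N+1} \ge \lambda^p/(2\norm[L^\infty]{u})^{p-1}\}\) into a single implication and then invokes \eqref{eq_chaK6eique5lo9ahy9Ooju9e} with \(p=1\), exactly as you do. Your closing remark about the loss of uniformity in \(p\) for the direct radial-integral argument also matches the paper's own commentary on the alternative proof of the case \(p>1\).
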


\Cref{theorem_intro} clearly implies \cref{theorem_W11_Wsp} since \(\norm[L^\infty(\Rset)]{u} \le \norm[L^1 (\Rset)]{u'},\,\forall u \in C^\infty_c (\Rset)\). 
The case \(p = 1\) of \eqref{eq:1.14} follows from \eqref{eq_chaK6eique5lo9ahy9Ooju9e} by setting \(p = 1\). The proof of \eqref{eq:1.14} in the case \(p > 1\) can be reduced to the case \(p = 1\) (see \cref{section_weak_estimates}). Alternatively, in the same section, we show how we can establish \eqref{eq:1.14} in the case \(p > 1\) by a more elementary method, if we allow a constant \(C\) that depends not only on \(N\) but also on \(p\). This method, which is in line with the techniques in Bourgain, Brezis and Mironescu \cite{Bourgain_Brezis_Mironescu_2001}, can be contrasted with the proof of \eqref{eq:1.14} in the case \(p = 1\): the latter relies on a covering argument in the one-dimensional case and the method of rotation to reach higher dimensions.

\Cref{theorem_W11_Wsp,theorem_intro} can be restated equivalently as Lorentz spaces estimates. 
In particular, replacing \(M^p\) by \(L^{p, \infty}\), we get \cref{theorem_W11_Wsp,theorem_intro} at the endpoint of the Lorentz scale. 

We show in \cref{section_optimal} that there is no improvement of \cref{theorem_W11_Wsp,theorem_intro} in the Lorentz scale. (Recall that for any fixed \(p\) the Lorentz spaces \(L^{p, q}\) increase as \(q\) increases.)

\medskip 

We now turn to another situation, also involving \(\dot{W}^{1, 1}\), where a Gagliardo--Nirenberg-type inequality fails. 
Let \(0 < s_1 < 1\), \(1 < p_1 < \infty\) and \(0 < \theta < 1\).
Set
\begin{align}
  \label{eq_Ahqueep9eci9aikio5eequai}
  s &= \theta s_1 + (1 - \theta) &
  &\text{ and }&
  \frac{1}{p} & = \frac{\theta}{p_1} + (1 - \theta)\eofs.
\end{align}
It is known that  the estimate 
\begin{equation}
  \label{eq_chuofah2shaiZiemohqu5eej}
  \seminorm[W^{s, p} (\Rset^N)]{u}
  =
  \Biggnorm[L^p (\Rset^N \times \Rset^N)]{\frac{u (x) - u (y)}{\abs{x - y}^{\frac{N}{p} + s}}}
  \le 
  C \seminorm[W^{s_1, p_1} (\Rset^N)]{u}^\theta \norm[L^1 (\Rset^N)]{\nabla u}^{1 - \theta}
\end{equation}
\begin{itemize}
  \item \emph{holds} for every \(\theta \in (0, 1)\) when \(s_1 p_1 < 1\) (Cohen, Dahmen, Daubechies and DeVore \cite{Cohen_Dahmen_Daubechies_DeVore_2003}),
  \item \emph{fails} for every \(\theta \in (0, 1)\) when \(s_1 p_1 \ge 1\) (Brezis and Mironescu \cite{Brezis_Mironescu_2018}).
\end{itemize}
We investigate here what happens in the regime \(s_1 p_1 \ge 1\).
Our main result in this direction is

\begin{theorem}

  \label{theorem_Lorentz} 
For every $N \geq 1$, \(p_1 \in (1, \infty)\) and \(\theta \in (0, 1)\), there exists a constant \(C = C(N,p_1,\theta)\) such that for all \(s_1 \in (0, 1)\) with \(s_1 p_1 \ge 1\), we have 
  \begin{equation}
    \label{eq_giu7ohWoo2vae4jei0AhgieG}
    \Biggquasinorm[L^{p, \frac{p_1}{\theta}} (\Rset^N \times \Rset^N)]{\frac{u (x) - u (y)}{\abs{x - y}^{\frac{N}{p} + s}}}
    \le 
    C \seminorm[W^{s_1, p_1} (\Rset^N)]{u}^\theta \norm[L^1 (\Rset^N)]{\nabla u}^{1 - \theta},
    \qquad \forall u \in C^\infty_c (\Rset^N) \eofs,
  \end{equation}
where \(0 < s <1\) and \(1 < p<\infty\) are defined by \eqref{eq_Ahqueep9eci9aikio5eequai}.
\end{theorem}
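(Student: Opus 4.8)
\emph{Proof strategy.} The plan is to deduce \eqref{eq_giu7ohWoo2vae4jei0AhgieG} from the weak-type endpoint estimate \eqref{eq_chaK6eique5lo9ahy9Ooju9e} (equivalently, from \cref{theorem_intro} with \(p = 1\)) by a pointwise factorization of the quotient, followed by H\"older's inequality on the Lorentz scale. By \eqref{eq_Ahqueep9eci9aikio5eequai} one has \(\frac{N}{p} + s = \theta\bigl(\frac{N}{p_1} + s_1\bigr) + (1 - \theta)(N + 1)\), so that for all \(x \neq y\) in \(\Rset^N\) the quotient in \eqref{eq_giu7ohWoo2vae4jei0AhgieG} factors as
\[
  F (x, y) \defeq \frac{\abs{u (x) - u (y)}}{\abs{x - y}^{\frac{N}{p} + s}}
  = G (x, y)^{\theta}\, H (x, y)^{1 - \theta}\eofs ,
  \quad\text{where}\quad
  G (x, y) \defeq \frac{\abs{u (x) - u (y)}}{\abs{x - y}^{\frac{N}{p_1} + s_1}}\eofs ,\quad
  H (x, y) \defeq \frac{\abs{u (x) - u (y)}}{\abs{x - y}^{N + 1}}\eofs .
\]
By the definition \eqref{eq_Jai7ahg2tahz4ua3zaishatu} of the Gagliardo seminorm, \(\norm[L^{p_1} (\Rset^N \times \Rset^N)]{G} = \seminorm[W^{s_1, p_1} (\Rset^N)]{u}\); and by \eqref{eq_chaK6eique5lo9ahy9Ooju9e} taken with exponent \(1\), \(\quasinorm[M^1 (\Rset^N \times \Rset^N)]{H} \le C(N)\, \norm[L^1 (\Rset^N)]{\nabla u}\). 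The crucial feature of this split is that \(s_1\) enters only through the finite quantity \(\seminorm[W^{s_1, p_1}(\Rset^N)]{u}\), which already appears on the right of \eqref{eq_giu7ohWoo2vae4jei0AhgieG}; hence any bound obtained this way is automatically uniform over \(s_1 \in (0,1)\) with \(s_1 p_1 \ge 1\).

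It then remains to combine the information \(G \in L^{p_1}\) and \(H \in M^1\). Writing \(g^*\) for the decreasing rearrangement of a measurable function \(g\) on \(\Rset^N \times \Rset^N\), using \((G^\theta)^* = (G^*)^\theta\) and \((H^{1-\theta})^* = (H^*)^{1-\theta}\) together with the submultiplicativity \((gh)^*(t_1 + t_2) \le g^*(t_1)\, h^*(t_2)\), I get \(F^*(t) \le \bigl(G^*(t/2)\bigr)^{\theta}\,\bigl(H^*(t/2)\bigr)^{1 - \theta}\). Since \(\frac1p = \frac{\theta}{p_1} + (1 - \theta)\) and \((t/2)\, H^*(t/2) \le \quasinorm[M^1 (\Rset^N \times \Rset^N)]{H}\), this gives
\[
  t^{1/p}\, F^*(t)
  \le 2^{1/p}\, \bigl( (t/2)^{1/p_1}\, G^*(t/2) \bigr)^{\theta}\, \bigl( (t/2)\, H^*(t/2) \bigr)^{1 - \theta}
  \le 2^{1/p}\, \quasinorm[M^1 (\Rset^N \times \Rset^N)]{H}^{1 - \theta}\, \bigl( (t/2)^{1/p_1}\, G^*(t/2) \bigr)^{\theta}\eofs .
\]
Raising to the power \(q \defeq p_1/\theta\), integrating \(\frac{\dif t}{t}\) over \((0, \infty)\), and using \(q\theta = p_1\) together with the identities \(\quasinorm[L^{p, q}]{f}^q = \int_0^\infty (t^{1/p} f^*(t))^q \, \frac{\dif t}{t}\) (a consequence of \eqref{eq_chahvoChohyae4aiK9ez4zoh} via the layer-cake formula) and \(\int_0^\infty (r^{1/p_1}\, G^*(r))^{p_1} \, \frac{\dif r}{r} = \norm[L^{p_1}]{G}^{p_1}\), I arrive at
\[
  \quasinorm[L^{p, p_1/\theta} (\Rset^N \times \Rset^N)]{F}
  \le 2^{1/p}\, \quasinorm[M^1 (\Rset^N \times \Rset^N)]{H}^{1 - \theta}\, \norm[L^{p_1} (\Rset^N \times \Rset^N)]{G}^{\theta}\eofs .
\]
Since \(\quasinorm[L^{p, p_1/\theta}]{F}\) is the left-hand side of \eqref{eq_giu7ohWoo2vae4jei0AhgieG}, and using \(\norm[L^{p_1}]{G} = \seminorm[W^{s_1, p_1}]{u}\) and \(\quasinorm[M^1]{H} \le C(N)\, \norm[L^1]{\nabla u}\), this proves \eqref{eq_giu7ohWoo2vae4jei0AhgieG} with \(C = 2\, C(N)^{1-\theta}\). (Equivalently, one may invoke the H\"older inequality for Lorentz spaces directly, applied to \(G^\theta \in L^{p_1/\theta,\, p_1/\theta}\) and \(H^{1-\theta} \in L^{1/(1-\theta),\, \infty}\): indeed \(\frac{\theta}{p_1} + (1 - \theta) = \frac1p\) and \(\frac{\theta}{p_1} + \frac1\infty = \frac\theta{p_1}\), so the product lands in \(L^{p, p_1/\theta}\) with constant depending only on \(p_1\) and \(\theta\).)

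No serious obstacle arises once the factorization \(F = G^\theta H^{1-\theta}\) has been written down; the point that deserves attention is the bookkeeping of the Lorentz exponents, i.e.\ that the first index collapses to exactly \(p\) and the second to exactly \(p_1/\theta\). This is also what explains why the target space in \eqref{eq_giu7ohWoo2vae4jei0AhgieG} must be the Lorentz space \(L^{p, p_1/\theta}\) rather than \(M^p\): when \(s_1 p_1 \ge 1\) one has \(p_1/\theta > p\), so the second index is strictly larger than the first and a strong \(L^p\) bound cannot hold, in accordance with the failure of \eqref{eq_chuofah2shaiZiemohqu5eej}. One should also record the routine facts that \(1 < p < \infty\) and \(0 < s < 1\) follow from \eqref{eq_Ahqueep9eci9aikio5eequai} (using \(p_1 > 1\), \(0 < \theta < 1\) and \(0 < s_1 < 1\)), and that \(q = p_1/\theta > 1\), so that the definition \eqref{eq_chahvoChohyae4aiK9ez4zoh} indeed applies to \(L^{p, p_1/\theta}\).
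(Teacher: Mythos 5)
Your proof is correct and follows essentially the same route as the paper: factor the difference quotient as $G^\theta H^{1-\theta}$ with $G \in L^{p_1,p_1}$ and $H \in L^{1,\infty}$, then apply H\"older's inequality in Lorentz spaces. The only cosmetic difference is that you reprove the Lorentz-H\"older step directly via decreasing rearrangements and the submultiplicativity inequality $(gh)^*(t_1+t_2) \le g^*(t_1)h^*(t_2)$, whereas the paper cites O'Neil's theorem (and you correctly write the exponent $\tfrac{N}{p_1}+s_1$ in $G$, where the paper's displayed formula has a small typo $\tfrac{N}{p}+s_1$).
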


Note that \(L^{p} (\Rset^N \times \Rset^N) \subsetneq L^{p, \frac{p_1}{\theta}} (\Rset^N \times \Rset^N)\) since \(p < \frac{p_1}{\theta}\); this is consistent with the fact that \eqref{eq_chuofah2shaiZiemohqu5eej} fails when \(s_1 p_1 \ge 1\). As an immediate consequence of \eqref{eq_giu7ohWoo2vae4jei0AhgieG} we obtain 
  \begin{equation}
  \label{eq_phai1AichooTheeRah3usi2a}
    \Biggquasinorm[M^p (\Rset^N \times \Rset^N)]{\frac{u (x) - u (y)}{\abs{x - y}^{\frac{N}{p} + s}}}
    \le 
    C \seminorm[W^{s_1, p_1} (\Rset^N)]{u}^\theta \norm[L^1 (\Rset^N)]{\nabla u}^{1 - \theta},
    \qquad \forall u \in C^\infty_c (\Rset^N) \eofs ;
  \end{equation}
a slightly more careful argument shows that the constant \(C\) in \eqref{eq_phai1AichooTheeRah3usi2a} can be taken to depend only on \(N\) but not on \(p_1\) nor \(\theta\). Estimate \eqref{eq_phai1AichooTheeRah3usi2a} was announced in \cite{Brezis_VanSchaftingen_Yung_2021}*{Corollary 5.2}. The proofs of \cref{theorem_Lorentz} and \eqref{eq_phai1AichooTheeRah3usi2a} are given in \cref{section_lorentz}. In the same section we establish the optimality of the exponent \(\frac{p_1}{\theta}\) in \eqref{eq_giu7ohWoo2vae4jei0AhgieG}.

\medskip

To conclude this paper we mention another estimate in the spirit of Gagliardo--Nirenberg interpolation between \(L^\infty\) and \(W^{1, 1}\).
It is originally due to Figalli-Serra \cite[Lemma 3.1]{Figalli_Serra} when $p = 2$ and $q = \infty$, with roots in Figalli--Jerison \cite[Lemma 2.1]{FJ2014} (see also \cite[Lemma 2.2 and Corollary 2.3]{Gui_Li} for a simpler proof and more general version). 

\begin{theorem} \label{proposition_FS}
Let $N \geq 1$, $1 < p < \infty$ and $N < q \leq \infty$.
There exists a constant $C = C(N,p,q)$ such that 
\begin{multline} \label{eq:FS}
\iint\limits_{B_1 \times B_1} \frac{|u(x)-u(y)|^p}{|x-y|^{N+1}} \dif x \dif y\\[-1em]
\leq C  \|u\|_{L^{\infty}(B_1)}^{p - 1} \|\nabla u\|_{L^1(B_1)} \biggr(1 + \log \max\biggl\{\frac{\|\nabla u\|_{L^q(B_1)}}{\|u\|_{L^{\infty}(B_1)}},1\biggr\}\biggl)
\end{multline}
for every $u \in C^1(\overline{B_1})$.
\end{theorem}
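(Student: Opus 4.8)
The plan is to split the domain $B_1 \times B_1$ according to the dyadic size of $\abs{x - y}$ and, on each dyadic shell $\{2^{-j-1} \le \abs{x-y} < 2^{-j}\}$, to bound $\abs{u(x) - u(y)}^p$ by one of two complementary pointwise estimates: a crude one that is efficient for \emph{large} $\abs{x - y}$ and a finer one that is efficient for \emph{small} $\abs{x - y}$, arranged so that each, after integration, produces the factor $\norm[L^1(B_1)]{\nabla u}$ that must appear in \eqref{eq:FS}. Since both sides of \eqref{eq:FS} are homogeneous of degree $p$ in $u$ — note that the argument of the logarithm is homogeneous of degree $0$ — and since the case $u \equiv 0$ is trivial, I normalise $\norm[L^\infty(B_1)]{u} = 1$ and write $\Lambda \defeq \norm[L^q(B_1)]{\nabla u}$, so that the claimed bound becomes $\iint_{B_1 \times B_1} \abs{u(x) - u(y)}^p \abs{x-y}^{-N-1} \dif x \dif y \le C (1 + \log \max\{\Lambda, 1\}) \norm[L^1(B_1)]{\nabla u}$.

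The two building blocks are the estimates, valid for every $\rho \in (0, 2]$,
\begin{align*}
  \int_{B_1} \int_{B(x,\rho) \cap B_1} \abs{u(x) - u(y)}^p \dif y \dif x
  &\le C(N) \, 2^{p-1} \, \rho^{N+1} \, \norm[L^1(B_1)]{\nabla u}, \\
  \int_{B_1} \int_{B(x,\rho) \cap B_1} \abs{u(x) - u(y)}^p \dif y \dif x
  &\le C(N,p,q) \, \Lambda^{p-1} \, \rho^{(1 - N/q)(p-1) + N + 1} \, \norm[L^1(B_1)]{\nabla u}.
\end{align*}
Both follow from the one-dimensional estimate $\abs{u(x) - u(y)} \le \int_{[x,y]} \abs{\nabla u}$ along the segment $[x,y] \subset B_1$ (which lies in $B_1$ by convexity): for the first I write $\abs{u(x) - u(y)}^p \le (2 \norm[L^\infty(B_1)]{u})^{p-1} \int_{[x,y]} \abs{\nabla u} = 2^{p-1} \int_{[x,y]} \abs{\nabla u}$, and for the second I keep one factor $\abs{u(x) - u(y)} \le \int_{[x,y]} \abs{\nabla u}$ and bound the remaining $p - 1$ factors by Morrey's inequality $\abs{u(x) - u(y)} \le C(N,q) \Lambda \abs{x - y}^{1 - N/q} \le C(N,q) \Lambda \rho^{1-N/q}$ (this is where $q > N$ enters; the case $q = \infty$ is the Lipschitz bound). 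In both cases the remaining double integral of $\int_{[x,y]}\abs{\nabla u}$ is bounded by $C(N) \rho^{N+1} \norm[L^1(B_1)]{\nabla u}$, which I would verify by passing to polar coordinates around $x$, using Fubini, and the change of variables $z = x + t\omega$ — keeping, thanks once more to convexity of $B_1$, all the first-order integrals over $B_1$ itself rather than over a dilate.

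Summing dyadically over the shells (the piece $\{1 \le \abs{x-y} < 2\}$ is disposed of directly by the first estimate with $\rho = 2$) and using $\abs{x-y}^{-N-1} \approx 2^{j(N+1)}$ on the $j$-th shell, the first estimate contributes $\lesssim 2^{p-1}\norm[L^1(B_1)]{\nabla u}$ per shell — bounded but not summable — while the second contributes $\lesssim \Lambda^{p-1}\norm[L^1(B_1)]{\nabla u}\, 2^{-j(1-N/q)(p-1)}$, which is summable since $(1 - N/q)(p-1) > 0$. I would therefore use the first estimate on the $J$ coarsest scales $0 \le j < J$ and the second on all $j \ge J$, obtaining
\[
  \iint_{B_1 \times B_1} \frac{\abs{u(x) - u(y)}^p}{\abs{x - y}^{N+1}} \dif x \dif y
  \le C(N, p, q) \, \norm[L^1(B_1)]{\nabla u} \Bigl( J + \Lambda^{p-1} \, 2^{-J(1 - N/q)(p-1)} \Bigr),
\]
and then optimise the cut-off: taking $J \approx 1 + (1 - N/q)^{-1} \log_2 \max\{\Lambda, 1\}$ makes the second term $O(1)$ while $J \lesssim 1 + \log \max\{\Lambda, 1\}$, which is exactly the asserted inequality (when $\Lambda \le 1$ one just takes $J = 1$). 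Undoing the normalisation restores the factors $\norm[L^\infty(B_1)]{u}^{p-1}$ and $\max\{\norm[L^q(B_1)]{\nabla u}/\norm[L^\infty(B_1)]{u}, 1\}$.

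The point where care is essential — and the reason a naive argument fails — is the critical kernel exponent $N + 1$: near the diagonal $\abs{u(x) - u(y)}^p \lesssim \abs{x - y}^p$, and bounding in this way (e.g.\ via Jensen) would introduce $\norm[L^p(B_1)]{\nabla u}^p$, which is not among the data and interpolates far too lossily between $\norm[L^1(B_1)]{\nabla u}$ and $\Lambda$ — there is no estimate of the form $\norm[L^p(B_1)]{\nabla u}^p \lesssim \norm[L^\infty(B_1)]{u}^{p-1}\norm[L^1(B_1)]{\nabla u}(1 + \log^+ \Lambda)$, as thin tall bumps show. The resolution, built into both building blocks, is never to spend the full power $p$ on a single pointwise bound: one always peels off exactly one factor $\abs{u(x) - u(y)} \le \int_{[x,y]}\abs{\nabla u}$ — which is what yields $\norm[L^1(B_1)]{\nabla u}$ with the correct power of $\rho$ after Fubini — and controls the remaining $p - 1$ factors by the trivial $L^\infty$ bound at coarse scales and by Morrey at fine scales; the logarithm then appears precisely because the coarse bound is used on $\sim \log \Lambda$ shells. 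The one genuinely routine-but-fiddly step is the Fubini/change-of-variables computation of the double integral of $\int_{[x,y]}\abs{\nabla u}$, where one must be careful that the segments, and hence the final integrals, remain inside the convex set $B_1$.
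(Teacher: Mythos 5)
Your proof is correct and follows essentially the same route as the paper's: both proofs peel off exactly one factor $\abs{u(x)-u(y)}$ to produce $\norm[L^1(B_1)]{\nabla u}$ and bound the remaining $p-1$ factors by $\min\{1, C\Lambda^{p-1}\abs{x-y}^{\alpha(p-1)}\}$ via Morrey, with the logarithm emerging from the resulting radial integral. The only differences are presentational: you run a dyadic decomposition in $\abs{x-y}$ and optimise the cut-off $J$ where the paper integrates $\int_0^2 \min\{1, r^{\alpha(p-1)}\Lambda^{p-1}\}\,\dif r/r$ directly, and you use segment integrals plus convexity of $B_1$ where the paper extends $u$ to $B_3$ and uses the translation estimate $\int_{B_1}\abs{u(x+h)-u(x)}\dif x \le \abs{h}\,\norm[L^1(B_3)]{\nabla u}$.
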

Here $B_1$ denotes the unit ball in $\Rset^N$. See \cref{section_FS} for a proof of \cref{proposition_FS}.  
  
\subsection*{Acknowledgments}

This work was carried out during two visits of J. Van Schaftingen to Rutgers University. He thanks H. Brezis for the invitation and  the Department of Mathematics for its hospitality. P-L. Yung was partially supported by a Future Fellowship FT200100399 from the Australian Research Council.
 H. Brezis is grateful to C. Sbordone who communicated to him the interesting paper \cite{Greco_Schiattarella} by Greco and Schiattarella which triggered our work.

\section{Proofs of \texorpdfstring{\cref{thm_X_1,proposition_level_set_constant}}{Theorems 1 and 2}}
\label{section_constant}

%  A first ingredient in the proof of \cref{thm_X_1} is standard:
  \cref{thm_X_1} is an immediate consequence of the standard \cref{lemma_level_set} below, and \cref{proposition_level_set_constant}.  
  \begin{lemma}
  \label{lemma_level_set}
   Let  \(f\) be a measurable function on \(\Rset^N \times \Rset^N\) in \(L^{p, q} (\Rset^N \times \Rset^N)\) with \(1 \le p < \infty\), \(1 \le q < \infty\). Then
   \begin{equation}
   \label{eq_Shu3Shas2to5haeVieC2eph8}
   \lim_{\lambda \to \infty} 
 \lambda^p 
 \mathcal{L}^{2N}
 \bigl(
 \bigl\{
 (x, y) \in \Rset^N \times \Rset^N
 \st \abs{f (x, y)}
 \ge \lambda
 \bigr\}
 \bigr)
  = 0\eofs .
  \end{equation}
  \end{lemma}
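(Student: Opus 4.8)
The plan is to exploit the monotonicity of the distribution function of $f$ together with the absolute continuity of the integral appearing in the definition \eqref{eq_chahvoChohyae4aiK9ez4zoh} of the $L^{p,q}$ quasinorm. Write
\[
d_f (\lambda) \defeq \mathcal{L}^{2N} \bigl( \bigl\{ (x, y) \in \Rset^N \times \Rset^N \st \abs{f (x, y)} \ge \lambda \bigr\} \bigr), \qquad \lambda > 0 .
\]
First I would observe that $d_f (\lambda) < \infty$ for every $\lambda > 0$: since $d_f$ is non-increasing, $d_f (\lambda_0) = +\infty$ for some $\lambda_0 > 0$ would force $d_f (\mu) = +\infty$ for all $\mu \in (0, \lambda_0]$, making the integral in \eqref{eq_chahvoChohyae4aiK9ez4zoh} diverge and contradicting $f \in L^{p, q} (\Rset^N \times \Rset^N)$. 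In particular the nonnegative function $\mu \mapsto \mu^{q - 1} d_f (\mu)^{q/p}$ is finite a.e.\ and, by \eqref{eq_chahvoChohyae4aiK9ez4zoh}, belongs to $L^1 (0, \infty)$ with $\int_0^\infty \mu^{q-1} d_f(\mu)^{q/p} \dif \mu = \tfrac1p \quasinorm[L^{p,q}]{f}^q$.

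Next, using once more that $d_f$ is non-increasing, for every $\lambda > 0$ and every $\mu \in [\lambda/2, \lambda]$ we have $d_f (\mu) \ge d_f (\lambda)$, whence
\[
\int_{\lambda/2}^{\lambda} \mu^q d_f (\mu)^{q/p} \frac{\dif \mu}{\mu}
\ge d_f (\lambda)^{q/p} \int_{\lambda/2}^{\lambda} \mu^{q - 1} \dif \mu
= \frac{1 - 2^{-q}}{q} \, \lambda^q d_f (\lambda)^{q/p} .
\]
The left-hand side is bounded above by $\int_{\lambda/2}^{\infty} \mu^{q - 1} d_f (\mu)^{q/p} \dif \mu$, which tends to $0$ as $\lambda \to \infty$ because $\mu \mapsto \mu^{q - 1} d_f (\mu)^{q/p}$ is integrable on $(0, \infty)$. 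Hence $\lambda^q d_f (\lambda)^{q/p} \to 0$ as $\lambda \to \infty$, and raising to the power $p/q$ (which maps $0$ to $0$ continuously) yields $\lambda^p d_f (\lambda) \to 0$, which is exactly \eqref{eq_Shu3Shas2to5haeVieC2eph8}.

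There is no real obstacle here; the only mild point is the passage from the integral over $[\lambda/2, \lambda]$ to a tail of a convergent integral, which is immediate from $\int_{\lambda/2}^{\lambda} \le \int_{\lambda/2}^{\infty}$. I note that the argument is one-dimensional in the variable $\lambda$ and uses nothing about the product domain $\Rset^N \times \Rset^N$ beyond $\sigma$-finiteness of the underlying measure; the hypotheses $p, q \ge 1$ enter only through $1 \le q < \infty$, which makes the constant $\tfrac{1 - 2^{-q}}{q}$ positive and finite.
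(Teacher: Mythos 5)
Your proof is correct and follows essentially the same route as the paper's: the paper also integrates over $[\lambda/2,\lambda]$, bounds below using the monotonicity of the distribution function to extract $\lambda^q d_f(\lambda)^{q/p}$ up to the constant $\frac{1-2^{-q}}{q}$, and concludes from the vanishing of tails of the convergent integral defining the $L^{p,q}$ quasinorm. The only (harmless) extra step you include is the explicit observation that $d_f(\lambda)<\infty$ for every $\lambda>0$.
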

  \begin{proof}
   Set 
   \begin{equation*}
    \varphi (\lambda) = \int_{\lambda/2}^\lambda
    t^q
 \mathcal{L}^{2N}
 \bigl(
 \bigl\{
 (x, y) \in \Rset^N \times \Rset^N
 \st \abs{f (x, y)}
 \ge t
 \bigr\}
 \bigr)^{\frac{q}{p}} \frac{\dif t}{t}.
   \end{equation*}
Since \(f \in L^{p, q} (\Rset^N \times \Rset^N)\), we know that \(\varphi (\lambda) \to 0\) as \(\lambda \to \infty\).
On the other hand, 
\[
\begin{split}
 \varphi (\lambda) &
 \ge \mathcal{L}^{2N}
 \bigl(
 \bigl\{
 (x, y) \in \Rset^N \times \Rset^N
 \st \abs{f (x, y)}
 \ge \lambda
 \bigr\}
 \bigr)^{\frac{q}{p}} \int_{\lambda/2}^\lambda t^q \frac{\dif t}{t}\\
 & = \mathcal{L}^{2N}
 \bigl(
 \bigl\{
 (x, y) \in \Rset^N \times \Rset^N
 \st \abs{f (x, y)}
 \ge \lambda
 \bigr\}
 \bigr)^{\frac{q}{p}}
 \frac{\lambda^q}{q}
 \Bigl(1 - \frac{1}{2^q}\Bigr),
\end{split}
\]
which yields \eqref{eq_Shu3Shas2to5haeVieC2eph8}.
  \end{proof}
  
\begin{proof}[Proof of \cref{proposition_level_set_constant} when \(p > 1\)]
Let \(E_\lambda \subset \Rset^N \times \Rset^N\) denote the set in the left-hand side of \eqref{eq_EuK1che4iepahtheyoo2oiw3}. 
First observe that for each \(\lambda > 0\),
\[
 \iint\limits_{\Rset^N \times \Rset^N}
  \Biggl(
   \frac{\abs{u (x) - u (y)}}{\abs{x - y}^{\frac{N}{p} + 1}} - \lambda
  \Biggr)_+
  \dif y \dif x
  \le \int_\lambda^\infty \mathcal{L}^{2N} (E_t) \dif t
  \le \frac{1}{(p - 1)\lambda^{p - 1}} 
  \sup_{t \ge \lambda} t^p \mathcal{L}^{2N} (E_t) \eofs .
\]
Hence, we have 
\begin{equation}
\label{eq_hi3ohtohsh1iiXiz1miec1ee}
 \lim_{\lambda \to \infty}
 \lambda^{p - 1}
  \iint\limits_{\Rset^N \times \Rset^N}
  \Biggl(
   \frac{\abs{u (x) - u (y)}}{\abs{x - y}^{\frac{N}{p} + 1}} - \lambda
  \Biggr)_+
  \dif y \dif x= 0\eofs .
\end{equation}
We next use an argument similar to the one in \cites{488780,RanjbarMotlagh} and \cite{VanSchaftingen_2019}*{Proof of Proposition 5.1}.
From the triangle inequality and change of variable, we obtain
\begin{equation}
\label{eq_Sietheewu9uajuoB9ohsh5ah}
\begin{split} 
 \iint\limits_{\Rset^N \times \Rset^N}\!\!&
  \Biggl(
   \frac{\abs{u (x) - u (y)}}{\abs{x - y}^{\frac{N}{p} + 1}} - \lambda
  \Biggr)_+\!\!
  \dif y \dif x\\
& \le \!\!\iint\limits_{\Rset^N \times \Rset^N}
 \!\!\! \Biggl(
   \frac{\abs{u (x) - u (\frac{x + y}{2})}}{\abs{x - y}^{\frac{N}{p} + 1}} - \frac{\lambda}{2}
  \Biggr)_+\!\!\!\!
  \dif y \dif x
  + \!\!\! \iint\limits_{\Rset^N \times \Rset^N}\!\!\!
  \Biggl(
   \frac{\abs{u (\frac{x + y}{2}) - u (y)}}{\abs{x - y}^{\frac{N}{p} + 1}} - \frac{\lambda}{2}
  \Biggr)_+\!\!\!\!
  \dif y \dif x\\
& = 2^{\frac{N}{p} (p - 1)}\!\!\!\!
\iint\limits_{\Rset^N \times \Rset^N}\!\!\!\!
  \Biggl(
   \frac{\abs{u (x) - u (y)}}{\abs{x - y}^{\frac{N}{p} + 1}} - 2^\frac{N}{p} \lambda
  \Biggr)_+\!\!\!\!
  \dif y \dif x\eofs.
\end{split}
\end{equation}
Iterating \eqref{eq_Sietheewu9uajuoB9ohsh5ah}, we have in view of \eqref{eq_hi3ohtohsh1iiXiz1miec1ee},
\[
  \iint\limits_{\Rset^N \times \Rset^N}
  \Biggl(
   \frac{\abs{u (x) - u (y)}}{\abs{x - y}^{\frac{N}{p} + 1}} - \lambda
  \Biggr)_+
  \dif y \dif x = 0\eofs , \qquad \forall \lambda > 0\eofs ,
\]
from which it follows that \(u\) is constant.
\end{proof}

  \begin{proof}[Proof of \cref{proposition_level_set_constant} when \(p = 1\)]
As already mentioned, in this case the conclusion of \cref{proposition_level_set_constant} is essentially due to A. Poliakovsky \cite{Poliakovsky}. Indeed,  if a measurable function $u$ satisfies \eqref{eq_EuK1che4iepahtheyoo2oiw3}, then so does its truncation $u_h$ for any $h > 0$ where $u_h := \max\{\min\{u,h\},-h\}$. Then $u_h \in L^1(B_h)$, where $B_h := \{x \in \Rset^N \colon |x| < h\}$, and \cite{Poliakovsky}*{Cor.\ 1.1} (with $q = 1$, $\Omega = B_h$) shows that $u_h$ is a constant on $B_h$. Since this is true for every $h > 0$, this also shows $u$ is a constant.

The proof of \cite{Poliakovsky}*{Cor.\ 1.1} is quite intricate and we refer the reader to \cite{Poliakovsky}; it would be interesting to find a simpler argument as in the case \(p > 1\).
  \end{proof}
  
  We also call the attention of the reader to
  \begin{openproblem}
   Does the conclusion of \cref{proposition_level_set_constant} still hold if ``\(\lim\)'' is replaced by ``\(\liminf\)'' in \eqref{eq_EuK1che4iepahtheyoo2oiw3}?
  \end{openproblem}

\section{Proofs of \texorpdfstring{\cref{theorem_intro}}{Theorem 4}}
\label{section_weak_estimates}

\cref{theorem_intro} can be derived as an immediate consequence of \eqref{eq_chaK6eique5lo9ahy9Ooju9e} (applied with $p = 1$) and the fact that 
\[
\frac{|u(x)-u(y)|}{|x-y|^{\frac{N+1}{p}}} \geq \lambda \quad \text{implies} \quad \frac{|u(x)-u(y)|}{|x-y|^{N+1}} \geq \frac{\lambda^p}{(2 \|u\|_{L^{\infty}})^{p-1}}.
\]
Hence
\[
\mathcal{L}^{2N} \Big( \Big\{ (x,y) \in \Rset^N \times \Rset^N \colon \frac{|u(x)-u(y)|}{|x-y|^{\frac{N+1}{p}}} \geq \lambda \Big\} \Big) \leq \frac{2^{p-1} C}{\lambda^p} \|u\|_{L^{\infty}(\Rset^N)}^{p-1} \|\nabla u\|_{L^1(\Rset^N)},
\]
where $C = C(N)$ is as in \eqref{eq_chaK6eique5lo9ahy9Ooju9e}; note that $(2^{p-1} C)^{1/p}$ can be dominated by a constant depending only on $N$. This proves \cref{theorem_intro}. 
\qed

\bigskip

For the enjoyment of the reader we also present an elementary qualitative argument for the case \(p > 1\) of \cref{theorem_intro} which does not make use of \eqref{eq_chaK6eique5lo9ahy9Ooju9e}. It relies on the following estimate occuring in \cite{Bourgain_Brezis_Mironescu_2001}; unfortunately it yields a constant $C$ in \eqref{eq:1.14} which depends on $p$ and $N$, and which deteriorates as $p \searrow 1$.
Note that \eqref{eq_aesh0iichoo8OT0aa4eHiepu}
is a straightforward consequence of the inequality (see \cite{Brezis_2011}*{Proposition 9.3})
\[
\int_{\Rset^N}
\abs{u (x + h) - u (x)} \dif x
\le \abs{h} \int_{\Rset^N} \abs{\nabla u}\eofs,
\qquad \forall h \in \Rset^N\eofs,\ \forall u \in C^\infty_c (\Rset^N)\,.
\]

\begin{lemma}
\label{lemma_frac_1_rho}
For every \(u \in C^\infty_c (\Rset^N)\) and \(\rho \in L^1 (\Rset^N)\),
\begin{equation}
  \label{eq_aesh0iichoo8OT0aa4eHiepu}
\iint\limits_{\Rset^N \times \Rset^N}
\frac{\abs{u (x) - u (y)}}{\abs{x - y}} \rho (x - y) \dif y \dif x
\le 
\norm[L^1 (\Rset^N)]{\rho} \int_{\Rset^N} \abs{\nabla u}\eofs,
\end{equation}
and in particular choosing \(\rho (z) = \mathbf{1}_{B_r (0)} (z)/\abs{z}^{N - \delta}\), \(\delta > 0\), we obtain
\begin{equation}
  \label{eq_IeQuu6ahh5ohThieLaiphaiZ}
  \iint
    \limits_{\substack{(x, y) \in \Rset^N \times \Rset^N\\
    \abs{x - y} \le r}}
    \frac
      {\abs{u (x) - u (y)}}
      {\abs{x - y}^{N + 1 - \delta}}
      \dif y
      \dif x 
    \le
      C(N)
      \frac{r^{\delta}}{\delta}
      \int_{\Rset^N} \abs{\nabla u}
      \eofs.
\end{equation}
\end{lemma}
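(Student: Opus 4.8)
The plan is to derive both inequalities from the translation estimate quoted just before the statement, $\int_{\Rset^N} \abs{u(x+h) - u(x)} \dif x \le \abs{h} \int_{\Rset^N} \abs{\nabla u}$, which itself follows in one line by writing $u(x+h) - u(x) = \int_0^1 \nabla u(x+th) \cdot h \dif t$ and integrating in $x$.

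For \eqref{eq_aesh0iichoo8OT0aa4eHiepu} I would first observe that the integrand is nonnegative, so Tonelli's theorem permits the change of variables $z = x - y$ with the $x$-integration performed first:
\[
\iint\limits_{\Rset^N \times \Rset^N} \frac{\abs{u(x) - u(y)}}{\abs{x-y}}\, \rho(x-y) \dif y \dif x = \int_{\Rset^N} \frac{\rho(z)}{\abs{z}} \biggl( \int_{\Rset^N} \abs{u(x+z) - u(x)} \dif x \biggr) \dif z,
\]
where I also used translation invariance of Lebesgue measure in the inner integral. Bounding the inner integral by $\abs{z} \int_{\Rset^N}\abs{\nabla u}$ via the quoted estimate, the two powers of $\abs{z}$ cancel, and integrating the remaining factor $\rho(z)$ over $\Rset^N$ produces $\norm[L^1(\Rset^N)]{\rho} \int_{\Rset^N}\abs{\nabla u}$. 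In particular the left-hand side is finite whenever $\rho \in L^1(\Rset^N)$ and $u \in C^\infty_c(\Rset^N)$, which retroactively justifies the use of Tonelli.

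Then I would obtain \eqref{eq_IeQuu6ahh5ohThieLaiphaiZ} by specializing to $\rho(z) = \mathbf{1}_{B_r(0)}(z)\,\abs{z}^{-(N-\delta)}$, which belongs to $L^1(\Rset^N)$ exactly because $\delta > 0$. With this choice the integrand $\frac{\abs{u(x)-u(y)}}{\abs{x-y}}\rho(x-y)$ is precisely $\mathbf{1}_{\{\abs{x-y}\le r\}}\frac{\abs{u(x)-u(y)}}{\abs{x-y}^{N+1-\delta}}$, so the left-hand side of \eqref{eq_aesh0iichoo8OT0aa4eHiepu} coincides with that of \eqref{eq_IeQuu6ahh5ohThieLaiphaiZ}; it then remains only to compute, in polar coordinates,
\[
\norm[L^1(\Rset^N)]{\rho} = \int_{B_r(0)} \frac{\dif z}{\abs{z}^{N-\delta}} = \bigabs{\Sset^{N-1}} \int_0^r t^{\delta-1} \dif t = \bigabs{\Sset^{N-1}}\,\frac{r^\delta}{\delta},
\]
which gives \eqref{eq_IeQuu6ahh5ohThieLaiphaiZ} with $C(N) = \bigabs{\Sset^{N-1}}$.

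I do not expect any real obstacle here: the whole statement is a repackaging of the elementary translation estimate, and the only steps that call for a word of care are the invocation of Tonelli for the change of variables and the observation that the singularity of $\rho$ at the origin is integrable precisely when $\delta > 0$ (when $\delta \ge N$ the function $\rho$ is bounded and the computation is even easier).
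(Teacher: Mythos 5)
Your argument is correct and follows exactly the route the paper indicates: the authors state just before the lemma that \eqref{eq_aesh0iichoo8OT0aa4eHiepu} is a straightforward consequence of the translation estimate $\int_{\Rset^N}\abs{u(x+h)-u(x)}\dif x \le \abs{h}\int_{\Rset^N}\abs{\nabla u}$, and then give no further proof. You have supplied the expected details — Tonelli plus the change of variables $z = x-y$, cancellation of the two factors of $\abs{z}$, and the polar-coordinate computation $\norm[L^1]{\rho} = \bigabs{\Sset^{N-1}}r^\delta/\delta$ — and they are all sound.
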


\begin{proof}%
  [Alternative proof of \eqref{eq:1.14} when \(p > 1\)]%
  \resetconstant%
 Define the set 
  \begin{equation}
    \label{eq_thaoWahnaesoa5Gi3taSah5E}
    E_\lambda
    \defeq \biggl\{
    (x, y) \in \Rset^N \times \Rset^N
    \st
    \frac{\abs{u (x) - u (y)}}{\abs{x - y}^{\frac{N + 1}{p}}} \ge \lambda
    \biggr\}\eofs.
  \end{equation}
  Observe that 
  \begin{equation*}
    E_\lambda \subseteq K_\lambda \defeq  \biggl\{(x, y) \in \Rset^N \times \Rset^N \st \abs{x - y} \le \bigl(2 \norm[L^\infty (\Rset^N)] {u}/\lambda\bigr)^{\frac{p}{N + 1}} \biggr\}\eofs.
  \end{equation*}
  Thus 
  \[
  \mathbf{1}_{E_\lambda} \le \mathbf{1}_{K_\lambda} \frac{1}{\lambda}  \frac{\abs{u (x) - u (y)}}{\abs{x - y}^{\frac{N + 1}{p}}}.
  \]
  Hence we have
  \[
  \begin{split}
    \mathcal{L}^{2 N} (E_{\lambda})
    \le
    \frac{1}{\lambda}
    \iint
    \limits_{K_\lambda}
    \frac
    {\abs{u (x) - u (y)}}
    {\abs{x - y}^{\frac{N + 1}{p}}}
    \dif y
    \dif x \eofs.
  \end{split}
  \]
  It then follows by \eqref{eq_IeQuu6ahh5ohThieLaiphaiZ}, with \(\delta \defeq (N + 1)(1 - \frac{1}{p}) > 0\) and \(r \defeq (2 \norm[L^\infty (\Rset^N)] {u}/\lambda)^{\frac{p}{N + 1}}\), that
  \begin{equation}
    \mathcal{L}^{2 N}
    \bigl(E_\lambda\bigr)
    \le
    \frac
    {
      C(N)
      \,
      (2\norm[L^\infty(\Rset^N)]{u})^{p - 1}
    }
    {(N + 1)(1 - \frac{1}{p}) \lambda^{p}}
    \int_{\Rset^N}
    \abs{\nabla u}
    \eofs.\qedhere
  \end{equation}
\end{proof}

\section{Optimality of \texorpdfstring{\cref{theorem_W11_Wsp,theorem_intro}}{Theorems 3 and 4} in the Lorentz scale}
\label{section_optimal}

\Cref{theorem_W11_Wsp,theorem_intro} cannot be improved. This is a consequence of the following lemma and its proof. 

\begin{lemma}
  \label{proposition_optimaity_linfty}
  Assume that \(1 \le p < \infty\).
  If
  \begin{equation}
  \label{eq_cheaghoochie6Eizier8Xoth}
  \Biggquasinorm[L^{p, q} (\Rset^N \times \Rset^N)]{\frac{u (x) - u(y)}{\abs{x - y}^{\frac{N + 1}{p}}} }
  \le 
  C
  \norm[L^\infty (\Rset^N)]{u}^{1 - \frac{1}{p}}
  \norm[L^1 (\Rset^N)]{\nabla u}^{\frac{1}{p}}\eofs,
  \qquad 
  \forall u \in C^\infty_c (\Rset^N)
  \end{equation}
holds for some \(1 \le q \le \infty\), then \(q = \infty\).
\end{lemma}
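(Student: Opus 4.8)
To show that \eqref{eq_cheaghoochie6Eizier8Xoth} forces $q = \infty$, I would argue by contradiction: assuming the estimate holds for some finite $q$, I will exhibit a sequence of test functions for which the left-hand side blows up relative to the right-hand side. The natural candidates are bump functions with a single sharp ``jump'' of unit height spread over a thin layer. Concretely, fix a smooth profile $\phi : \Rset \to [0,1]$ with $\phi = 0$ on $(-\infty, 0]$ and $\phi = 1$ on $[1, \infty)$, and for small $\varepsilon > 0$ consider (in dimension $N$) a function $u_\varepsilon$ that equals $\phi(x_1/\varepsilon)$ near the origin, cut off at unit scale. Then $\norm[L^\infty]{u_\varepsilon} \sim 1$ and $\norm[L^1]{\nabla u_\varepsilon} \sim 1$ (the gradient has size $1/\varepsilon$ on a slab of width $\varepsilon$ and unit cross-section), so the right-hand side of \eqref{eq_cheaghoochie6Eizier8Xoth} stays bounded as $\varepsilon \to 0$.

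**The core computation.** The point is to estimate the distribution function of $f_\varepsilon(x,y) := \frac{|u_\varepsilon(x) - u_\varepsilon(y)|}{|x-y|^{(N+1)/p}}$ from below across a whole range of levels $\lambda$, rather than at a single level. For a pair $(x,y)$ with $x$ on one side of the slab and $y$ on the other, $|u_\varepsilon(x) - u_\varepsilon(y)| \sim 1$ while $|x - y|$ can be as small as $\sim \varepsilon$; more precisely, for any $t$ with $\varepsilon \lesssim t \lesssim 1$ there is a set of pairs of measure $\sim t^{N+1}$ (one point in a ball of radius $t$ around a generic point of the slab, the other point at distance $\sim t$ on the far side) on which $|u_\varepsilon(x) - u_\varepsilon(y)| \sim 1$ and $|x-y| \sim t$, hence $f_\varepsilon(x,y) \sim t^{-(N+1)/p}$. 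Translating: for $\lambda$ in the range $1 \lesssim \lambda \lesssim \varepsilon^{-(N+1)/p}$, writing $t \sim \lambda^{-p/(N+1)}$, one gets
\[
\mathcal{L}^{2N}\bigl(\{ f_\varepsilon \ge \lambda \}\bigr) \gtrsim t^{N+1} \sim \lambda^{-p}.
\]
Thus $\lambda^p \, \mathcal{L}^{2N}(\{f_\varepsilon \ge \lambda\}) \gtrsim 1$ on a dyadic range of length $\sim \log(1/\varepsilon)$ in $\log \lambda$. Plugging this lower bound into the defining integral \eqref{eq_chahvoChohyae4aiK9ez4zoh} for the $L^{p,q}$ quasinorm with $q < \infty$:
\[
\quasinorm[L^{p,q}]{f_\varepsilon}^q = p \int_0^\infty \bigl(\lambda^p \mathcal{L}^{2N}(\{f_\varepsilon \ge \lambda\})\bigr)^{q/p} \frac{\dif \lambda}{\lambda} \gtrsim \int_1^{\varepsilon^{-(N+1)/p}} 1 \cdot \frac{\dif \lambda}{\lambda} \sim \log(1/\varepsilon) \longrightarrow \infty,
\]
while the right-hand side of \eqref{eq_cheaghoochie6Eizier8Xoth} remains $O(1)$. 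This contradiction is impossible unless $q = \infty$, in which case the integral is replaced by a supremum and no blow-up occurs (indeed \cref{theorem_intro} holds at $q = \infty$).

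**Main obstacle.** The delicate point is the lower bound on the distribution function uniformly over the whole dyadic range $\varepsilon \lesssim t \lesssim 1$, and making sure the cutoff to unit scale does not destroy it — I would localize everything inside a fixed ball so that the ``far side'' of the slab is genuinely present for every such $t$, and be careful that the measure estimate $\sim t^{N+1}$ (rather than, say, $t^{2N}$) is correct: one coordinate of the separation is constrained to be $\sim t$ in a one-dimensional way (crossing the slab), while the remaining $2N-1$ coordinates range freely over a set of diameter $\sim t$, giving $t \cdot t^{2N-1} = t^{N+1}$ only if we also localize the base point, so in fact one fixes $x$ to lie in a slab-region of measure $O(1)$ and then the $y$-set has measure $\sim t^{N+1}$... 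I would double-check this bookkeeping, since it is exactly the scaling that produces $\lambda^{-p}$ and hence the logarithm. A cleaner route, which I would present if the direct bump computation gets unwieldy, is to reduce to $N = 1$ by a slicing/Fubini argument (as is done elsewhere in the paper via the method of rotation) and do the one-dimensional computation with $u_\varepsilon(x) = \phi(x/\varepsilon)$ explicitly, where the distribution-function estimate is completely transparent.
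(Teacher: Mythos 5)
Your proposal is correct and tracks the same underlying mechanism as the paper's proof for $p>1$: use a jump-type function so that the distribution function of $f=\frac{u(x)-u(y)}{|x-y|^{(N+1)/p}}$ is $\gtrsim\lambda^{-p}$ over a long dyadic range of $\lambda$, making $\int \lambda^q \mathcal{L}^{2N}(\{f\ge\lambda\})^{q/p}\,\frac{d\lambda}{\lambda}$ diverge while the right-hand side stays bounded. The execution differs in one respect that is worth flagging. The paper works in $N=1$, extends \eqref{eq_cheaghoochie6Eizier8Xoth} by approximation to compactly supported $BV$ functions, and then uses the single function $u=\chi_{[0,1]}$ (whose $L^{p,q}$ quasinorm of the quotient is literally infinite, since $\mathcal{L}^2(E_\lambda)\gtrsim\lambda^{-p}$ for all $\lambda\ge1$); the case $p=1$ is handled separately by invoking \cref{thm_X_1}. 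You instead take a family $u_\varepsilon\in C^\infty_c$ with a transition layer of width $\varepsilon$ and send $\varepsilon\to 0$, obtaining $\gtrsim(\log(1/\varepsilon))^{1/q}$ on the left against $O(1)$ on the right. This has the advantage of staying entirely inside $C^\infty_c$ (no $BV$ approximation needed) and of treating $p=1$ and $p>1$ uniformly, at the cost of tracking the $\varepsilon$-dependence; the two are morally identical since $u_\varepsilon\to\chi_{\{x_1>0\}}$.

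On the bookkeeping worry you flag: your final count $t^{N+1}$ is the correct one, but the intermediate heuristic ``$t\cdot t^{2N-1}$'' is muddled (that product is $t^{2N}$, not $t^{N+1}$). The cleanest way to see it: $x_1$ ranges over a slab of width $\sim t$ on one side of the jump (measure $\sim t$), the remaining $x$-coordinates over a unit ball (measure $O(1)$), and for each such $x$ the admissible $y$ form a half-ball of radius $\sim t$ on the far side (measure $\sim t^N$), giving $t\cdot t^N=t^{N+1}$. Even simpler is your proposed fallback of doing the computation in $N=1$ (where $\mathcal{L}^2\sim t^2$ is transparent), exactly as the paper does, and handling $N>1$ by the same slab construction or by the rotation trick used elsewhere in the paper.
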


\begin{proof}
When \(p = 1\), the conclusion follows from \cref{thm_X_1}.
Indeed, we already know that if $q < \infty$, then for any measurable function \(u\),
\[
  \Biggquasinorm[L^{1, q} (\Rset^N \times \Rset^N)]{\frac{u (x) - u(y)}{\abs{x - y}^{N + 1}} } = \infty,
\]
unless \(u\) is a constant. 

When \(p > 1\), the argument is different since one may easily check that 
\[
 \Biggquasinorm[L^{p, q} (\Rset^N \times \Rset^N)]{\frac{u (x) - u (y)}{\abs{x - y}^{\frac{N + 1}{p}}} } < \infty, \qquad \forall u \in C^\infty_c (\Rset^N), \forall q \in [1,\infty].
\]
\begin{comment}
indeed, if \(u \in C^{\infty}_c(\Rset^N)\), then
\[
\begin{split}
\Big\{(x,y) \in \Rset^N \times \Rset^N \colon \frac{\abs{u (x) - u (y)}}{\abs{x - y}^{\frac{N + 1}{p}}} > \lambda \Big\} 
\subseteq &
\{(x,y) \in K \times \Rset^N \colon 1 \leq |x-y| \lesssim \lambda^{-\frac{p}{N+1}} \} \\
\cup & \{(x,y) \in \Rset^N \times K \colon 1 \leq |x-y| \lesssim \lambda^{-\frac{p}{N+1}}\} \\
\cup & \{(x,y) \in K \times K \colon |x-y| < 1, |x-y|^{1-\frac{N+1}{p}} \gtrsim \lambda\}
\end{split}
\]
where $K$ is the support of $u$, so we have
\[
\begin{split}
&\mathcal{L}^{2N} \Big\{(x,y) \in \Rset^N \times \Rset^N \colon \frac{\abs{u (x) - u (y)}}{\abs{x - y}^{\frac{N + 1}{p}}} > \lambda \Big\}^{1/p}  \\
\lesssim \, & \lambda^{-\frac{N}{N+1}} \mathbf{1}_{\lambda \lesssim 1} + 
\begin{cases}
\mathbf{1}_{\lambda \lesssim 1} &\quad \text{if $1-\frac{N+1}{p} \geq 0$}\\
\mathbf{1}_{\lambda \leq 1} + \lambda^{-\frac{N}{N+1-p}} \mathbf{1}_{\lambda > 1} &\quad \text{if $1-\frac{N+1}{p} < 0$}.
\end{cases}
\end{split}
\]
Thus \(\displaystyle{\Biggquasinorm[L^{p, q} (\Rset^N \times \Rset^N)]{\frac{u (x) - u (y)}{\abs{x - y}^{\frac{N + 1}{p}}} } < \infty}\) when \(p > 1\), as claimed above.
\end{comment}

We consider the case \(N = 1\), the case \(N > 1\) being similar. 
By an approximation argument, it follows that \eqref{eq_cheaghoochie6Eizier8Xoth} holds for every \(u\in BV (\Rset)\) with compact support.
However, if \(u \defeq \chi_{[0, 1]}\), we have 
\begin{multline*}
\bigl\{ (x, y) \in (-1, 0) \times (0, 1) \st \abs{x - y} \le \lambda^{-p/2}\bigr\}\\
\subseteq 
E_\lambda
\defeq
\biggl\{
    (x, y) \in \Rset\times \Rset
    \st
    \frac{\abs{u (x) - u (y)}}{\abs{x - y}^{\frac{2}{p}}} \ge \lambda
    \biggr\}
\eofs,
\end{multline*}
and thus, if \(\lambda \ge 1\),
\[
\mathcal{L}^{2} (E_\lambda) \ge \frac{c}{\lambda^p}\eofs.
\]
Hence, if \(1 \le q < \infty\),
\[
  \Biggquasinorm[L^{p, q} (\Rset \times \Rset)]{\frac{u(x) - u(y)}{\abs{x - y}^{2/p}}}^q
= p \int_0^{\infty} \lambda^q \mathcal{L}^{2} (E_\lambda)^\frac{q}{p} \frac{\dif \lambda}{\lambda} \ge p \, c^\frac{q}{p} \int_1^\infty \frac{\dif \lambda}{\lambda}= \infty\eofs ,
\]
which contradicts \eqref{eq_cheaghoochie6Eizier8Xoth}.
\end{proof}

\section{Proof of \texorpdfstring{\cref{theorem_Lorentz}}{Theorem 5}}
\label{section_lorentz}

We deduce \cref{theorem_Lorentz} from Theorem 1.1 in \cite{Brezis_VanSchaftingen_Yung_2021} and the classical product property in Lorentz spaces.

\begin{proof}[Proof of \cref{theorem_Lorentz}]
By Theorem 1.1 in \cite{Brezis_VanSchaftingen_Yung_2021}, we have 
\[
\Biggquasinorm[L^{1, \infty} (\Rset^N \times \Rset^N)]{\frac{u (x) - u (y)}{\abs{x - y}^{N + 1}}}
\le 
C \norm[L^1 (\Rset^N)]{\nabla u}\eofs 
\]
where \(C = C(N)\). On the other hand, by definition of the Gagliardo semi-norm
\[
\Biggquasinorm[L^{p_1, p_1} (\Rset^N \times \Rset^N)]{\frac{u (x) - u (y)}{\abs{x - y}^{\frac{N}{p_1} + 1}}}
= 
\seminorm[W^{s_1, p_1}]{u}\eofs.
\]

We now observe that
\[
\frac{\abs{u (x) - u (y)}}{\abs{x - y}^{\frac{N}{p} + s}}
= \left(\frac{\abs{u (x) - u (y)}}{\abs{x - y}^{N + 1}}\right)^{1 - \theta}
\left(\frac{\abs{u (x) - u (y)}}{\abs{x - y}^{\frac{N}{p} + s_1 }}\right)^{\theta}.
\]
Hence, by the product property in Lorentz spaces \cite{ONeil_1963}*{Theorem 3.4},
\begin{multline} \label{eq:Lorentz_Holder_q}
  \Biggquasinorm[L^{p, \frac{p_1}{\theta}} (\Rset^N \times \Rset^N)]{\frac{u (x) - u (y)}{\abs{x - y}^{\frac{N}{p} + s }}}\\
  \le C\Biggquasinorm[L^{\frac{1}{1-\theta}, \infty} (\Rset^N \times \Rset^N)]{\biggl(\frac{\abs{u (x) - u (y)}}{\abs{x - y}^{N + 1}} \biggr)^{1 - \theta}}  \Biggquasinorm[L^{\frac{p_1}{\theta}, \frac{p_1}{\theta}} (\Rset^N \times \Rset^N)]{\biggl(\frac{\abs{u (x) - u (y)}}{\abs{x - y}^{\frac{N}{p} + 1}} \biggr)^{\theta}}\eofs.
\end{multline}
Here \(C = C(N,p_1,\theta)\). In order to conclude, we use the fact that for \(1 \le p <+\infty\), \(1 \le q \le \infty\) and \(0 < \beta < 1\), \(\quasinorm[L^{p/\beta, q/\beta}(\Rset^N \times \Rset^N)]{f^\beta} = \quasinorm[L^{p, q} (\Rset^N \times \Rset^N)]{f}^\beta\).
\end{proof}

If instead of \eqref{eq:Lorentz_Holder_q}, we use
\begin{multline} \label{eq:Lorentz_Holder}
\Biggquasinorm[M^p (\Rset^N \times \Rset^N)]{\frac{u(x)-u(y)}{|x-y|^{\frac{N}{p}+s}}}\\
\leq 2^{1/p} \Biggquasinorm[L^{\frac{1}{1-\theta},\infty} (\Rset^N \times \Rset^N)]{\biggl(\frac{\abs{u (x) - u (y)}}{\abs{x - y}^{N + 1}} \biggr)^{1 - \theta}} \Biggquasinorm[L^{\frac{p_1}{\theta},\infty} (\Rset^N \times \Rset^N)]{\biggl(\frac{\abs{u (x) - u (y)}}{\abs{x - y}^{\frac{N}{p} + 1}} \biggr)^{\theta}},
\end{multline}
then we obtain \eqref{eq_phai1AichooTheeRah3usi2a} with \(C = C(N)\) independent of \(p_1\) and \(\theta\).

The optimality of \cref{theorem_Lorentz} follows from

\begin{lemma}
  \label{proposition_critical_interpolation}
  Fix \(s_1 \in (0, 1)\), \(p_1 \in (1, \infty)\) such that \(s_1 p_1 \ge 1\) and \(\theta \in (0, 1)\).
  Let \(0 < s <1\) and \(1 < p<\infty\) be defined by \eqref{eq_Ahqueep9eci9aikio5eequai}.
If
\[
\Biggquasinorm[L^{p, q} (\Rset^N \times \Rset^N)]{\frac{u (x) - u(y)}{\abs{x - y}^{\frac{N}{p} + s}}}
\le C
\seminorm[W^{s_1, p_1}]{u}^{\theta}
\norm[L^1 (\Rset^N)]{\nabla u}^{1 - \theta},
\qquad
\forall u \in C^\infty_c (\Rset^N)
\]
holds for some \(1 \le q \le \infty\), then \(q \ge \frac{p_1}{\theta}\).
\end{lemma}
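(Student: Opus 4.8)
The plan is to test the hypothesized inequality on a carefully chosen family of test functions that saturate the three norms appearing in it, and then extract the constraint $q \ge \frac{p_1}{\theta}$ from the scaling behavior. The natural candidates are bump functions whose "spatial scale" and "amplitude scale" can be tuned independently, for instance $u_{\varepsilon}(x) = \varphi(x/\varepsilon)$ or, more flexibly, a two-parameter family $u_{a,\varepsilon}(x) = a\,\varphi(x/\varepsilon)$ with $\varphi \in C^\infty_c(\Rset^N)$ fixed and $a, \varepsilon > 0$. The point is that the right-hand side behaves like a product of powers of $a$ and $\varepsilon$ (computed from the explicit homogeneities of $\seminorm[W^{s_1,p_1}]{u}$ and $\norm[L^1]{\nabla u}$), so a one-parameter subfamily on which the right-hand side stays bounded (say, equal to $1$) can be singled out; along that subfamily the left-hand Lorentz quasinorm must then remain bounded, and the failure of boundedness when $q < \frac{p_1}{\theta}$ yields the contradiction.

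First I would compute the scaling exponents: $\norm[L^1(\Rset^N)]{\nabla u_{a,\varepsilon}} \simeq a\,\varepsilon^{N-1}$, $\seminorm[W^{s_1,p_1}]{u_{a,\varepsilon}} \simeq a\,\varepsilon^{\frac{N}{p_1}-s_1}$, and $\Bigquasinorm[L^{p,q}]{\frac{u_{a,\varepsilon}(x)-u_{a,\varepsilon}(y)}{\abs{x-y}^{N/p+s}}} \simeq a\,\varepsilon^{\frac{2N}{p}-(\frac{N}{p}+s)} = a\,\varepsilon^{\frac{N}{p}-s}$, with the $q$-dependence of this last quantity hidden inside the implicit constant (since for a single fixed profile all Lorentz quasinorms are finite and comparable). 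One then checks, using \eqref{eq_Ahqueep9eci9aikio5eequai}, that $\theta(\frac{N}{p_1}-s_1)+(1-\theta)(N-1) = \frac{N}{p}-s$ and $\theta + (1-\theta)=1$, so the inequality becomes, after dividing through, an inequality between the implicit constants — and this route alone will \emph{not} distinguish $q$. The sharper idea, as in the proof of Lemma \ref{proposition_optimaity_linfty}, is therefore to use a \emph{non-smooth} limiting profile, e.g. $u = \chi_{Q}$ for a cube $Q$ (justified for $BV$ functions with compact support by the same approximation argument quoted there), for which $\seminorm[W^{s_1,p_1}]{u}$ is finite precisely because $s_1 p_1 \ge 1$ fails to be... — rather, one takes $u$ to be a suitable mollification at scale $\varepsilon$ of $\chi_Q$, so that $\norm[L^1]{\nabla u_\varepsilon}$ stays bounded while the near-diagonal contribution to the $L^{p,q}$ quasinorm develops a logarithmic (in $\varepsilon$) blow-up exactly of order $(\log \frac1\varepsilon)^{\theta/q}$, coming from the annulus $\abs{x-y}\sim t$ over the range $\varepsilon \lesssim t \lesssim 1$ where the difference quotient is $\sim \lambda$.

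Concretely, I expect the distribution function of $\frac{u_\varepsilon(x)-u_\varepsilon(y)}{\abs{x-y}^{N/p+s}}$ at height $\lambda$ to satisfy $\mathcal{L}^{2N}(E_\lambda) \simeq \lambda^{-p}$ over a range of $\lambda$ of the form $1 \lesssim \lambda \lesssim \varepsilon^{-(\,N/p+s-1\,)}$, i.e. over $\log(1/\varepsilon)$ decades on a logarithmic scale, so that
\[
\Biggquasinorm[L^{p,q}(\Rset^N\times\Rset^N)]{\frac{u_\varepsilon(x)-u_\varepsilon(y)}{\abs{x-y}^{N/p+s}}}^q
\;=\; p\int_0^\infty \lambda^q\,\mathcal{L}^{2N}(E_\lambda)^{q/p}\,\frac{\dif\lambda}{\lambda}
\;\gtrsim\; \int_1^{\varepsilon^{-c}} \frac{\dif\lambda}{\lambda}
\;\simeq\; \log\frac1\varepsilon,
\]
while simultaneously $\seminorm[W^{s_1,p_1}]{u_\varepsilon}^\theta \norm[L^1]{\nabla u_\varepsilon}^{1-\theta} \lesssim (\log\frac1\varepsilon)^{\theta/p_1}$ (the logarithm entering only through the $W^{s_1,p_1}$ seminorm of the mollified characteristic function when $s_1 p_1 = 1$, or being absent and replaced by a bounded quantity when $s_1 p_1 > 1$ — in the latter case one instead rescales the profile to force a matching growth, or directly compares powers of $\varepsilon$). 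Feeding these into the hypothesized inequality gives $(\log\frac1\varepsilon)^{1/q} \lesssim (\log\frac1\varepsilon)^{\theta/p_1}$ for all small $\varepsilon$, hence $\frac1q \le \frac{\theta}{p_1}$, i.e. $q \ge \frac{p_1}{\theta}$, which is the claim. The main obstacle will be pinning down the precise logarithmic orders on both sides — in particular verifying that the near-diagonal region genuinely contributes a full factor $\log\frac1\varepsilon$ to the left side (and not less) while the right side grows no faster than $(\log\frac1\varepsilon)^{\theta/p_1}$; this requires choosing the mollification and the profile with some care, and treating the borderline case $s_1 p_1 = 1$ separately from $s_1 p_1 > 1$.
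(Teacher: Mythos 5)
Your proposal correctly captures the paper's strategy for the borderline case $s_1 p_1 = 1$: there the paper indeed tests on $u_k(x) = \varphi(k(\abs{x}-\tfrac12))$, a mollified characteristic function at scale $1/k$, and obtains $\norm[L^1]{u_k'} \le C$, $\seminorm[W^{s_1,p_1}]{u_k} \le C(\log k)^{1/p_1}$, and $\Bigl[\dotsb\Bigr]_{L^{p,q}} \ge c(\log k)^{1/q}$, from which $q \ge p_1/\theta$ follows exactly as you outline. Your computation that pure power-law scaling of smooth bumps cannot detect $q$ is also correct and a good sanity check.

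The gap is the case $s_1 p_1 > 1$, which your proposal does not actually handle. You suggest that when $s_1 p_1 > 1$ the $W^{s_1,p_1}$ seminorm of the mollified indicator is ``bounded'' and that one should ``directly compare powers of $\varepsilon$,'' but neither works. When $s_1 p_1 > 1$ the seminorm of $u_\varepsilon$ blows up polynomially, $\seminorm[W^{s_1,p_1}]{u_\varepsilon} \simeq \varepsilon^{-(s_1 p_1 - 1)/p_1}$, and on the other side $sp > 1$ so the $L^{p,q}$ quasinorm also blows up polynomially, $\simeq \varepsilon^{-(sp-1)/p}$. A short computation with \eqref{eq_Ahqueep9eci9aikio5eequai} shows $\frac{sp-1}{p} = \theta\,\frac{s_1 p_1 - 1}{p_1}$, so the two power laws match \emph{identically} regardless of $q$; the $q$-dependence is again buried in the constant and comparing powers of $\varepsilon$ yields no constraint on $q$. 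Nor does rescaling the profile help, since the homogeneities are rigid. To make the argument work when $s_1 p_1 > 1$ one needs a genuinely different family of test functions: the paper uses the iterated, Cantor-like construction $w_j^k$ of Brezis--Mironescu (two parameters: $k$ pieces per level, $j$ levels of self-similar iteration), which produces, as $k \to \infty$, $\norm[L^1]{(w_j^k)'} = 1$, $\seminorm[W^{s_1,p_1}]{w_j^k} \lesssim j^{1/p_1}$, and $\Bigl[\dotsb\Bigr]_{L^{p,q}} \gtrsim j^{1/q}$, so that the blow-up parameter $j$ counts iterations rather than spatial scale and decouples from the balanced $\varepsilon$-powers. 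Your proposal would need to be supplemented with this (or a comparable multi-scale) construction for the case $s_1 p_1 > 1$.
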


\begin{proof}[Proof of \cref{proposition_critical_interpolation} when \(s_1 p_1 = 1\)]
  We concentrate on the case \(N = 1\), the case \(N > 1\) being similar.
  Following \cite{Brezis_Mironescu_2018}*{Proof of Lemma 4.1, Step 1}, we define the function 
\[
u_k (x)= \varphi \bigl(k (\abs{x} - 1/2)\bigr)\eofs.
\]
where \(\varphi \in C^1 (\Rset)\), \(\varphi = 1\) on \((-\infty, -1]\) and \(\varphi = 0\) on \([1, \infty]\).
We have as in \cite{Brezis_Mironescu_2018} 
\begin{align}
  \label{eq_muSheigeerae0ahWei5pulah}
  \norm[L^1 (\Rset)]{u_k'} &\le C&
  &\text{and}&
  \seminorm[W^{s_1, p_1} (\Rset)]{u_k} &\le C (\log k)^\frac{1}{p_1}\eofs.
\end{align}
Given \(\lambda > 0\), we have since \(sp = 1\),
\begin{multline*}
  \Biggl\{ (x, y) \in [-1, 1] \times [-1, 1] \st \frac{\abs{u_k (x) - u_k (y)}}{\abs{x - y}^{\frac{1}{p} + s}} \ge \lambda\Biggr\}\\
\supseteq
\bigl\{ (x, y) \in [0, \tfrac{1}{2} - \tfrac{1}{k}] \times [\tfrac{1}{2} + \tfrac{1}{k}, 1] \st \abs{x - y} \le \lambda^{-p/2} \bigr\}\eofs.
\end{multline*}
Hence, there is \(c > 0\) such that if \(\lambda \le (k/4)^{2/p}\),
\[
\mathcal{L}^2 (E_\lambda) \ge \frac{c}{\lambda^p}\eofs. 
\]
It follows from \eqref{eq_chahvoChohyae4aiK9ez4zoh} that 
\begin{equation*}
  \Biggquasinorm[{L^{p, q} (\Rset \times \Rset)}]{\frac{u_k (x) - u_k (y)}{\abs{x - y}^{\frac{N}{p} + s}}}
  \ge \biggl(\int_1^{(k/4)^{2/p}} \frac{c \dif \lambda}{\lambda}\biggr)^\frac{1}{q}
\ge c' (\log k)^\frac{1}{q}\eofs.
\end{equation*}
By assumption and by \eqref{eq_muSheigeerae0ahWei5pulah}, we have 
\[
(\log k)^\frac{1}{q}
\le C (\log k)^\frac{\theta}{p_1}\eofs,
\]
and it follows thus that \(q \ge \frac{p_1}{\theta}\).
\end{proof}

\begin{proof}[Proof of \cref{proposition_critical_interpolation} for \(s_1 p_1 > 1\)]
We concentrate on the case \(N = 1\), the case \(N > 1\) being similar.
We adapt the proof from \cite{Brezis_Mironescu_2018}*{Proof of Lemma 4.1}, where functions \(w_j^k\) are constructed (in Step 2 there, with \(\alpha \defeq (s_1 - \frac{1}{p_1})/(1 - \frac{1}{p_1}) = (s - \frac{1}{p})/(1 - \frac{1}{p}) \))
and satisfy  
\begin{align}
  \label{eq_Gothie7dush2aimisahjoo7a}
   \norm[{L^1([0, 1])}]{(w_j^k)'} &= 1,&
  \limsup_{k \to \infty} \seminorm[{W^{s_1, p_1} ([0, 1])}]{w_j^k} &\le C j^{1/p_1}
\end{align}
and 
\begin{equation}
  \label{eq_jach8aekeeXiicee9aem1eej}
  \limsup_{k \to \infty} \seminorm[{W^{s, p} ([0, 1])}]{w_j^k} \ge \frac{j^{1/p}}{C}\eofs.
\end{equation}
We improve \eqref{eq_jach8aekeeXiicee9aem1eej} to cover the case \(q \ne p\) in the Lorentz scale \(L^{p, q}\).

Given \(\lambda > 0\), we have 
\begin{multline*}
  \biggl\{ (x, y) \in [0, 1] \times [0, 1] \st \frac{\abs{w_j^k (x) - w_j^k (y)}}{\abs{x - y}^{\frac{1}{p} + s}} \ge \lambda \biggr\}\\
\supseteq \bigl\{ (x, y) \in [0, 1] \times [0, 1] \st \abs{w_j^k (x) - w_j^k (y)} \ge \lambda\bigr\}\eofs,
\end{multline*}
and thus if \(\lambda \le \frac{1}{3}\), we have 
\[
\mathcal{L}^2 \biggl( \biggl\{ (x, y) \in [0, 1] \times [0, 1] \st \frac{\abs{w_j^k (x) - w_j^k (y)}}{\abs{x - y}^{\frac{1}{p} + s}} \ge \lambda \biggr\} \biggr)
\ge c
\]
for some constant \(c > 0\).

Next by the inductive definition of \(w_j^k\) and by scaling, we have 
\begin{equation*}
    \begin{split}
  \mathcal{L}^2 \biggl( \biggl\{ (x, y) &\in [0, 1] \times [0, 1] \st \frac{\abs{w_j^k (x) - w_j^k (y)}}{\abs{x - y}^{\frac{1}{p} + s}} \ge \lambda \biggr\} \biggr)\\
  &\ge \sum_{\ell = 1}^k  
  \mathcal{L}^2 \biggl( \biggl\{ (x, y) \in I_k^\ell \times I_k^\ell \st \frac{\abs{w_j^k (x) - w_j^k (y)}}{\abs{x - y}^{\frac{1}{p} + s}} \ge \lambda \biggr\} \biggr)\\
  &\ge \frac{1}{k^{\frac{2}{\alpha} - 1}}
  \mathcal{L}^2 \biggl( \biggl\{ (x, y) \in [0, 1] \times [0, 1] \st \frac{\abs{w_{j - 1}^k (x) - w_{j - 1}^k (y)}}{\abs{x - y}^{\frac{1}{p} + s}} \ge  \frac{\lambda}{k^{\frac{1}{p}(\frac{2}{\alpha} - 1)}} \biggr\} \biggr)\eofs,
  \end{split}
\end{equation*}
where we used
\(
\alpha = \frac{s - \frac{1}{p}}{1 - \frac{1}{p}}
\)
to obtain
\[
 \frac{\frac{1}{p} + s}{\alpha}-
1
= \frac{2}{p\alpha} + \frac{s - \frac{1}{p}}{\alpha} - 1
= \frac{1}{p} \biggl(\frac{2}{\alpha} - 1\biggr)\eofs.
\]

By induction, for each \(i \in \{1, \dotsc, j\}\) and \(
\lambda \le k^{\frac{i - 1}{p}(\frac{2}{\alpha} - 1)}/3\),
we have 
\[  
\mathcal{L}^2 \biggl( \biggl\{ (x, y) \in [0, 1] \times [0, 1] \st \frac{\abs{w_j^k (x) - w_j^k (y)}}{\abs{x - y}^{\frac{1}{p} + s}} \ge \lambda \biggr\} \biggr)
\ge 
\frac{c}{k^{(i - 1)(\frac{2}{\alpha} - 1)}}\eofs.
\]
We finally estimate in view of \eqref{eq_chahvoChohyae4aiK9ez4zoh}
\begin{equation}
  \label{eq_Eegh1eiquucouphaepon4ahl}
  \Biggquasinorm[{L^{p, q} ([-1, 1] \times [-1, 1])}]{\frac{w^k_j (x) - w^k_j (y)}{\abs{x - y}^{\frac{1}{p} + s}}}
   \ge
c'
\left( \sum_{i = 1}^j \int_{k^{\frac{i-1}{p}(\frac{2}{\alpha} - 1)}/3}^{k^{\frac{i}{p}(\frac{2}{\alpha} - 1)}/3}
\frac{\lambda^{q- 1}}{k^{\frac{q}{p}(i - 1)(\frac{2}{\alpha} - 1)}} \dif \lambda \right)^{1/q}
\ge c'' j^{1/q} \eofs.
\end{equation}
The conclusion follows from the assumptions combined with the estimates \eqref{eq_Gothie7dush2aimisahjoo7a} and \eqref{eq_Eegh1eiquucouphaepon4ahl}.
\end{proof}

\section{Proof of \texorpdfstring{\cref{proposition_FS}}{Theorem 6}} \label{section_FS}

We may always extend \(u\) to \(B_3\) with control of norms and assume that \(\Vert u\Vert_{L^\infty (B_1)} = 1\).
By the Sobolev--Morrey embedding we have (since \(q > N\)) 
\begin{equation}
\label{eq_hohx0Ate4Aezoh7ooy2os4jo}
\abs{u (x) - u (y)} \le C \min \bigl\{1, \abs{x - y}^\alpha \Vert \nabla u \Vert_{L^q (B_1)}\bigr\},\quad 
\text{for all \(x, y \in B_1\),}
\end{equation}
where \(\alpha = 1 - \tfrac{N}{q}\).
Thus 
\[
 \abs{u (x) - u (y)}^p \le C  \min \bigl\{1, \abs{x - y}^{\alpha(p - 1)} \Vert \nabla u \Vert_{L^q (B_1)}^{p - 1}\bigr\} \abs{u (x) - u (y)}
\]
and therefore
\begin{multline}
 \iint\limits_{B_1 \times B_1} \frac{|u(x)-u(y)|^p}{|x-y|^{N+1}} \dif x \dif y\\
 \le C \int_{B_2} \dif h \int_{B_1} \min \bigl\{1, \abs{h}^{\alpha(p - 1)} \Vert \nabla u \Vert_{L^q (B_1)}^{p - 1}\bigr\} \frac{\abs{u (x +h ) - u(x)}}{|h|^{N+1}}\dif x.
\end{multline}
Since
\[
 \int_{B_1} \abs{u (x + h) - u (x)} \dif x \le \abs{h} \lVert \nabla u \rVert_{L^1 (B_3)} \le C \abs{h} \lVert \nabla u \rVert_{L^1 (B_1)},
 \quad \text{for all \(h \in B_2\)},
\]
we conclude that 
\begin{equation}
\begin{split}
 \smashoperator{\iint\limits_{B_1 \times B_1}}\frac{|u(x)-u(y)|^p}{|x-y|^{N+1}} \dif x \dif y
&\le C \lVert \nabla u \rVert_{L^1 (B_1)} \!\int_{B_2}\!\!\min \bigl\{1, \abs{h}^{\alpha(p - 1)} \Vert \nabla u \Vert_{L^q (B_1)}^{p - 1}\bigr\} \frac{\dif h}{\abs{h}^N}\\
&= C' \lVert \nabla u \rVert_{L^1 (B_1)} \!\int_0^2\!\! \min \bigl\{1, r^{\alpha(p - 1)} \Vert \nabla u \Vert_{L^q (B_1)}^{p - 1}\bigr\} \frac{\dif r}{r}
\end{split}
\end{equation}
and the conclusion follows from a straightforward computation.
\qed

\begin{bibdiv}
\begin{biblist}

  \bib{Bourgain_Brezis_Mironescu_2000}{article}{
    author={Bourgain, Jean},
    author={Brezis, Haim},
    author={Mironescu, Petru},
    title={Lifting in Sobolev spaces},
    journal={J. Anal. Math.},
    volume={80},
    date={2000},
    pages={37--86},
    issn={0021-7670},
%     review={\MR{1771523}},
    doi={10.1007/BF02791533},
  }
  
\bib{Bourgain_Brezis_Mironescu_2001}{article}{
  author={Bourgain, Jean},
  author={Brezis, Haim},
  author={Mironescu, Petru},
  title={Another look at Sobolev spaces},
  conference={
    title={Optimal control and partial differential equations},
  },
  book={
    publisher={IOS, Amsterdam},
  },
  date={2001},
  pages={439--455},
%   review={\MR{3586796}},
}
		
\bib{Brezis_2002}{article}{
   author={Brezis, Haim},
   title={How to recognize constant functions. A connection with Sobolev
   spaces},
   language={Russian},
   journal={Uspekhi Mat. Nauk},
   volume={57},
   date={2002},
   number={4(346)},
   pages={59--74},
   issn={0042-1316},
   translation={
      journal={Russian Math. Surveys},
      volume={57},
      date={2002},
      number={4},
      pages={693--708},
      issn={0036-0279},
   },
%    review={\MR{1942116}},
   doi={10.1070/RM2002v057n04ABEH000533},
}

\bib{Brezis_2011}{book}{
  author={Brezis, Haim},
  title={Functional analysis, Sobolev spaces and partial differential equations},
  series={Universitext},
  publisher={Springer, New York},
  date={2011},
  pages={xiv+599},
  isbn={978-0-387-70913-0},
%   review={\MR{2759829}},
}

\bib{Brezis_Mironescu_2018}{article}{
   author={Brezis, Ha\"{i}m},
   author={Mironescu, Petru},
   title={Gagliardo--Nirenberg inequalities and non-inequalities: the full
   story},
   journal={Ann. Inst. H. Poincar\'{e} Anal. Non Lin\'{e}aire},
   volume={35},
   date={2018},
   number={5},
   pages={1355--1376},
   issn={0294-1449},
   doi={10.1016/j.anihpc.2017.11.007},
}

\bib{Brezis_VanSchaftingen_Yung_2021}{article}{
  title={A surprising formula for Sobolev norms},
  author={Brezis, Ha\"{\i}m},
  author={Van Schaftingen, Jean},
  author={Yung, Po-Lam},
  doi={10.1073/pnas.2025254118},
  journal={Proc. Natl. Acad. Sci. USA},
  year={2021},
  volume={118},
  number={8},
  pages={e2025254118},
}

\bib{Brezis_VanSchaftingen_Yung_2020_arXiv}{article}{
  title={A surprising formula for Sobolev norms and related topics},
  author={Brezis, Ha\"{\i}m},
  author={Van Schaftingen, Jean},
  author={Yung, Po-Lam},
  journal={arXiv:2003.05216v4},
}

\bib{Castillo_Rafeiro_2016}{book}{
  author={Castillo, Ren\'{e} Erl\'{\i}n},
  author={Rafeiro, Humberto},
  title={An introductory course in Lebesgue spaces},
  series={CMS Books in Mathematics},
  publisher={Springer},
  address={Cham},
  date={2016},
  pages={xii+461},
  isbn={978-3-319-30032-0},
  isbn={978-3-319-30034-4},
%   review={\MR{3497415}},
  doi={10.1007/978-3-319-30034-4},
}

\bib{Cohen_Dahmen_Daubechies_DeVore_2003}{article}{
   author={Cohen, Albert},
   author={Dahmen, Wolfgang},
   author={Daubechies, Ingrid},
   author={DeVore, Ronald},
   title={Harmonic analysis of the space BV},
   journal={Rev. Mat. Iberoamericana},
   volume={19},
   date={2003},
   number={1},
   pages={235--263},
   issn={0213-2230},
   doi={10.4171/RMI/345},
}

\bib{DeMarco_Mariconda_Solimini_2008}{article}{
  author={De Marco, Giuseppe},
  author={Mariconda, Carlo},
  author={Solimini, Sergio},
  title={An elementary proof of a characterization of constant functions},
  journal={Adv. Nonlinear Stud.},
  volume={8},
  date={2008},
  number={3},
  pages={597--602},
  issn={1536-1365},
  %       review={\MR{2426913}},
  doi={10.1515/ans-2008-0306},
}

\bib{488780}{misc}{    
    title={If $\int_{\mathbb{R}^2} \frac{\vert f(x)-f(y)\vert}{\vert x-y\vert^2} dxdy < \infty$ then $f$ is a.e. constant},    
    author={fedja (https://math.stackexchange.com/users/12992/fedja)},    
    note={https://math.stackexchange.com/questions/488780 (version: 2013-10-30)},    
    eprint={https://math.stackexchange.com/questions/488780},    
    organization={Math. Stack Exchange}  
}

\bib{FJ2014}{article}{
   author={Figalli, Alessio},
   author={Jerison, David},
   title={How to recognize convexity of a set from its marginals},
   journal={J. Funct. Anal.},
   volume={266},
   date={2014},
   number={3},
   pages={1685--1701},
   issn={0022-1236},
%   review={\MR{3146832}},
   doi={10.1016/j.jfa.2013.05.040},
}

\bib{Figalli_Serra}{article}{
   author={Figalli, Alessio},
   author={Serra, Joaquim},
   title={On stable solutions for boundary reactions: a De Giorgi-type
   result in dimension $4+1$},
   journal={Invent. Math.},
   volume={219},
   date={2020},
   number={1},
   pages={153--177},
   issn={0020-9910},
%   review={\MR{4050103}},
   doi={10.1007/s00222-019-00904-2},
}

\bib{Gui_Li}{article}{
   author={Gui, Changfeng},
   author={Li, Qinfeng},
   title={Some energy estimates for stable solutions to fractional
   Allen--Cahn equations},
   journal={Calc. Var. Partial Differential Equations},
   volume={59},
   date={2020},
   number={2},
   pages={Paper No. 49},
   issn={0944-2669},
%   review={\MR{4064339}},
   doi={10.1007/s00526-020-1701-2},
}

\bib{Grafakos_2014}{book}{
  author={Grafakos, Loukas},
  title={Classical Fourier analysis},
  series={Graduate Texts in Mathematics},
  volume={249},
  edition={3},
  publisher={Springer, New York},
  date={2014},
  pages={xviii+638},
  isbn={978-1-4939-1193-6},
  isbn={978-1-4939-1194-3},
%   review={\MR{3243734}},
  doi={10.1007/978-1-4939-1194-3},
}

\bib{Greco_Schiattarella}{article}{
  author={Greco, Luigi},
  author={Schiattarella, Roberta},
  journal={Commun. Contemp. Math.},
  title={An embedding theorem for BV-functions},
  doi={10.1142/S0219199719500329},
}

\bib{Hunt_1966}{article}{
  author={Hunt, Richard A.},
  title={On \(L(p, q)\) spaces},
  journal={Enseign. Math. (2)},
  volume={12},
  date={1966},
  pages={249--276},
  issn={0013-8584},
%   review={\MR{223874}},
}

\bib{ONeil_1963}{article}{
  author={O'Neil, Richard},
  title={Convolution operators and $L(p,\,q)$ spaces},
  journal={Duke Math. J.},
  volume={30},
  date={1963},
  pages={129--142},
  issn={0012-7094},
%   review={\MR{146673}},
}

\bib{Poliakovsky}{article}{
author={Poliakovsky, Arkady},
eprint={arXiv:2102.00557},
title={
Some remarks on a formula for Sobolev norms due to Brezis, Van Schaftingen and Yung},}

\bib{RanjbarMotlagh}{article}{
   author={Ranjbar-Motlagh, Alireza},
   title={A remark on the Bourgain-Brezis-Mironescu characterization of
   constant functions},
   journal={Houston J. Math.},
   volume={46},
   date={2020},
   number={1},
   pages={113--115},
   issn={0362-1588},
   review={\MR{4137280}},
}

\bib{VanSchaftingen_2019}{article}{
   author={Van Schaftingen, Jean},
   title={Estimates by gap potentials of free homotopy decompositions of
   critical Sobolev maps},
   journal={Adv. Nonlinear Anal.},
   volume={9},
   date={2020},
   number={1},
   pages={1214--1250},
   issn={2191-9496},
%    review={\MR{4042308}},
   doi={10.1515/anona-2020-0047},
}

\bib{Ziemer_1989}{book}{
  author={Ziemer, William P.},
  title={Weakly differentiable functions},
  series={Graduate Texts in Mathematics},
  volume={120},
  subtitle={Sobolev spaces and functions of bounded variation},
  publisher={Springer-Verlag, New York},
  date={1989},
  pages={xvi+308},
  isbn={0-387-97017-7},
%   review={\MR{1014685}},
  doi={10.1007/978-1-4612-1015-3},
}
\end{biblist}
\end{bibdiv}

\end{document}